\documentclass[11pt]{article}
\usepackage{amsmath,amsfonts,amsthm,amscd,amssymb,graphicx}
\usepackage{url}
\usepackage[utf8]{inputenc}
\usepackage{amsbsy}
\usepackage{graphicx, color}
\usepackage[text={7.0in,9.5in},centering,includefoot,foot=0.6in]{geometry}
\usepackage{verbatim}

\setlength{\parskip}{1.0ex plus0.2ex minus0.2ex}
\setlength{\parindent}{0.0in}


\newtheorem{Lemma}{Lemma}[section]

\newtheorem{Definition}[Lemma]{Definition}

\newtheorem{Proposition}[Lemma]{Proposition}
\newtheorem{Remark}[Lemma]{Remark}
\newtheorem{Theorem}[Lemma]{Theorem}

 {\begin{trivlist}\item[]\textbf{Proof#1 }}%
 {\hspace*{\fill}$\rule{0.3\baselineskip}{0.35\baselineskip}$\end{trivlist}}

 {\begin{trivlist}\item[]\textbf{Acknowledgments }}{\end{trivlist}}


\makeatletter\@addtoreset{equation}{section}\makeatother

\setlength\parindent{24pt}

\def\Re{\mathop\mathrm{Re}\nolimits}    
\def\Im{\mathop\mathrm{Im}\nolimits}    


\newcommand{\rmd}{\mathrm{d}}           
\newcommand{\rmi}{\mathrm{i}}           
\newcommand{\uu}{\mathbf{u}}

\begin{document}
\title{Selection of quasi-stationary states in the Navier-Stokes equation on the torus}
\author{Margaret Beck, Eric Cooper, Konstantinos Spiliopoulos\footnote{Department of Mathematics and Statistics, Boston University, Boston, MA 02215. E-mail: mabeck@bu.edu, cooper@bu.edu, kspiliop@math.bu.edu.}}
\date{}
\maketitle

\begin{abstract}
The two dimensional incompressible Navier-Stokes equation on $D_\delta := [0, 2\pi\delta] \times [0, 2\pi]$ with $\delta \approx 1$, periodic boundary conditions, and viscosity $0 < \nu \ll 1$ is considered.  Bars and dipoles, two explicitly given quasi-stationary states of the system, evolve on the time scale $\mathcal{O}(e^{-\nu t})$ and have been shown to play a key role in its long-time evolution. Of particular interest is the role that $\delta$ plays in selecting which of these two states is observed. Recent numerical studies suggest that, after a transient period of rapid decay of the high Fourier modes, the bar state will be selected if $\delta \neq 1$, while the dipole will be selected if $\delta = 1$. Our results support this claim and seek to mathematically formalize it.  We consider the system in Fourier space, project it onto a center manifold consisting of the lowest eight Fourier modes, and use this as a model to study the selection of bars and dipoles. It is shown for this ODE model that the value of $\delta$ controls the behavior of the asymptotic ratio of the low modes, thus determining the likelihood of observing a bar state or dipole after an initial transient period. Moreover, in our model, for all $\delta \approx 1$, there is an initial time period in which the high modes decay at the rapid rate $\mathcal{O}(e^{-t/\nu})$, while the low modes evolve at the slower $\mathcal{O}(e^{-\nu t})$ rate. The results for the ODE model are proven using energy estimates and invariant manifolds and further supported by formal asymptotic expansions and numerics.
\end{abstract}


\section{Introduction}\label{intro}

In this paper we consider the 2D incompressible Navier-Stokes equation
\begin{equation}
\begin{aligned}
\partial_t \uu = \nu \Delta \uu &- (\uu \cdot \nabla) \uu - \nabla p \label{E:2dns} \\
\nabla \cdot \uu &= 0
\end{aligned}
\end{equation}
on the possibly asymmetric torus $(x,y) \in D_\delta := [0, 2\pi\delta] \times [0, 2\pi]$ with $\delta \approx 1$, periodic boundary conditions, and viscosity $0 < \nu \ll 1$. Defining $\omega = (0,0,1) \cdot (\nabla \times \uu)$, one obtains the 2D vorticity equation
\begin{equation}\label{E:2dvort}
\partial_t \omega = \nu \Delta \omega - \uu\cdot\nabla \omega, \qquad \uu = \begin{pmatrix} \partial_y(- \Delta^{-1}) \\ -\partial_x(- \Delta^{-1})\end{pmatrix} \omega.
\end{equation}
The relation between $\uu$ and $\omega$ is known as the Biot-Savart law. The periodic boundary conditions force $\int_{D_\delta} \omega = 0$, and hence $\Delta^{-1} \omega$ is well-defined.

 Because the viscosity is small, it is reasonable to expect that stationary solutions of the Euler equation (\eqref{E:2dns} or \eqref{E:2dvort} with $\nu = 0$) would play a role in the long-time evolution of the Navier-Stokes equation. However, the Euler equation has infinitely many stationary solutions, so it is not obvious which such solutions are important.  In \cite{Yin}, entropy arguments and extensive numerical studies were conducted in the case $\delta = 1$ and suggested that the so-called bar states and dipoles should be the two most important stationary solutions of the Euler equations. Although both states were observed after initial transient periods in the evolution of the Navier-Stokes equation, interestingly the dipole seemed to emerge for a large class of initial data, whereas the bar state only emerged for a special class of initial data.  A later study \cite{BouchetSimonnet09} numerically analyzed \eqref{E:2dns} on $D_\delta$ with the addition of a certain type of stochastic forcing. There, after an initial transient period, a metastable switching between the bars and dipoles was seen, with the dipole being dominant for $\delta = 1$ and the bar states being dominant for $\delta \neq 1$. Related analytical work was conducted in \cite{BeckWayne13, IbrahimMaekawaMasmoudi17} where the rate of convergence to a bar state for appropriate initial conditions was shown to be $\mathcal{O}(e^{-\sqrt{\nu}t})$, while the bar state itself decayed at the  $\mathcal{O}(e^{-\nu t})$ background rate.  In this work, we will analyze the selection of bars and dipoles,  based on the parameter $\delta$.  At this point, we also refer the interested reader to \cite{Armbuster,FoiasLuanSaut,FoiasSaut2,FoiasSaut1,GalletYoung,KimOkamodo} for steady state results and results in the asymptotic regime as time goes to infinity.

If $\delta = 1$, any function of the form
\begin{equation}\label{E:family}
\omega(x,y;m) = e^{-\frac{\nu m^2}{\delta^2} t}[ a_1 \cos(mx/\delta) + a_2 \sin (mx/\delta)] + e^{-\nu m^2 t}[a_3 \cos(my) + a_4 \sin (my) ], \qquad m \in \mathbb{Z}.
\end{equation}
is an exact solution to \eqref{E:2dvort}. If $\delta \neq 1$, then (\ref{E:family}) is an exact solution to \eqref{E:2dvort} if and only if $a_1 = a_2 = 0$ or if $a_3 = a_4 = 0$. Bar states, also known as unidirectional or Komogorov flow, are members of this family for $m = 1$ given by
\[
\omega_{bar}(x,t) = e^{-\frac{\nu}{\delta^2}t} \sin (x/\delta), \qquad \omega_{bar}(y ,t) = e^{-\nu t} \sin y,
\]
or similarly with sine replaced by cosine. The associated velocity fields are given by
\[
\uu_{bar}(x,t) = -\delta e^{-\frac{\nu}{\delta^2}t} \left(
\begin{array}{c}
0\\
\cos(x/\delta)\\
\end{array}
\right), \qquad \uu_{bar}(y ,t) = e^{-\nu t} \left(
\begin{array}{c}
\cos y\\
0\\
\end{array}
\right),
\]
respectively. The dipoles are also members of the family for $m = 1$ and are given by
\[
\omega_{dipole}(x,y,t) = e^{-\frac{\nu}{\delta^2} t} \sin (x/\delta) + e^{-\nu t}\sin y,
\]
with velocity field
\[
\uu_{dipole}(x,y,t) =  \left(
\begin{array}{c}
e^{-\nu t}\cos y\\
-\delta e^{-\frac{\nu}{\delta^2}t}\cos(x/\delta)\\
\end{array}
\right),
\]
or similarly with sine replaced by cosine.  The bar states are exact solutions of \eqref{E:2dvort} for all $\delta \approx 1$, while the dipoles are only exact solutions for $\delta = 1$. In addition to the references mentioned above, the bar states were also studied analytically in \cite{MeshalkinSinaui61}. Although the setting was slightly different, their results suggest that, when $\delta =1$ an $m$-bar state $e^{-\nu m^2 t}\cos (my)$ (or similarly with sine replaced by cosine or $y$ replaced by $x$) is attracting if and only if $m = 1$. Because the dipoles are only approximate solutions for $\delta \neq 1$, it may be intuitive that they would not play a key role in the long-time evolution in that case. However, they were still observed in the metastable switching in the appropriately stochastically forced Navier-Stokes equation for $\delta \neq 1$ \cite{BouchetSimonnet09}.

Because of the form of the bar states and dipoles, it is useful to study \eqref{E:2dvort} in Fourier space, in which it can be written
\begin{equation}
\begin{aligned}
\dot{\hat{\omega}}_{\vec{k}} &= -\frac{\nu}{\delta^2} |\vec{k}|^2_\delta \hat{\omega}_{\vec{k}} - \delta \sum_{\vec{l}} \frac{\langle \vec{k}^\perp, \vec{l}\rangle}{|\vec{l}|_\delta^2} \hat{\omega}_{\vec{k}-\vec{l}}\hat{\omega}_{\vec{l}} \\
&= -\frac{\nu}{\delta^2} |\vec{k}|^2_\delta \hat{\omega}_{\vec{k}} - \frac{\delta}{2} \sum_{\vec{j}+\vec{l}=\vec{k}} \langle \vec{j}^\perp, \vec{l}\rangle \left( \frac{1}{|\vec{l}|_\delta^2} -  \frac{1}{|\vec{j}|_\delta^2} \right) \hat{\omega}_{\vec{j}}\hat{\omega}_{\vec{l}}, \label{E:vort}
\end{aligned}
\end{equation}
where
\[
|\vec{k}|_\delta^2 = k_1^2 + \delta^2 k_2^2, \qquad \vec{k}^\perp = (k_2, -k_1)
\]
and
\[
\omega(x,y) = \sum_{\vec{k} \neq 0} \hat{\omega}_{\vec{k}} e^{\rmi (k_1x/\delta + k_2y)}, \qquad \hat{\omega}_{\vec{k}} = \frac{1}{4\pi^2\delta}\int_D \omega(x,y) e^{-\rmi(k_1x/\delta + k_2y)} \rmd x \rmd y.
\]
In terms of these variables, the $y$-bar states $e^{-\nu t} \cos y$ and $e^{-\nu t} \sin y$ correspond to solutions with energy only in the $\vec{k} = (0, \pm 1)$ modes, the $x$-bar states $e^{-\frac{\nu}{\delta^2} t} \cos (x/\delta)$ and $e^{-\frac{\nu}{\delta^2} t} \sin (x/\delta)$ correspond to solutions with energy only in the $\vec{k} = (\pm 1, 0)$ modes, and the dipoles correspond to solutions with energy in both the $\vec{k} = (0, \pm 1)$ and $\vec{k} = (\pm 1, 0)$ modes. These four modes are the lowest modes in the system, in that they correspond to the modes with the lowest values of $|\vec{k}|_\delta$, with $|\vec{k}|_\delta = 1$ or $\delta^2$. We will refer to any modes with $|\vec{k}|_\delta > \max\{1, \delta^2\}$ to be high modes.

 When $\delta = 1$, the set $\{ \hat \omega_{\vec{k}} = 0 \mbox{ if } |\vec{k}| > 1\}$ is an exact global invariant manifold for \eqref{E:vort}. However, the dynamics on it are trivial, determined by the linear terms. Therefore, even though both the bars and dipoles lie within this manifold, if we want to understand how the system selects between them, we must include at least some of the higher modes. To do so, we conduct a center manifold reduction on \eqref{E:vort} and project onto the lowest eight modes, which we denote by
\begin{eqnarray}
\omega_1 &:=& \hat\omega(1,0),\quad \omega_2 := \hat\omega(-1, 0),\quad \omega_3 := \hat\omega(0, 1), \quad \omega_4 := \hat\omega(0, -1), \nonumber \\
\omega_5 &:=& \hat\omega(1,1),\quad \omega_6 := \hat\omega(-1, 1), \quad \omega_7 := \hat\omega(1, -1), \quad  \omega_8 := \hat\omega(-1, -1). \label{E:fourier}
\end{eqnarray}
The variables $\omega_{1, 2, 3, 4}$ correspond to the low modes, while $\omega_{5, 6, 7, 8}$ represent the role of all the high modes. Since the solution $\omega(x,y)$ of \eqref{E:2dvort} is real valued,
\begin{equation}\label{symmetry}
\omega_1 = \bar \omega_2, \quad \omega_3 = \bar \omega_4, \quad \omega_5 = \bar \omega_8, \quad \omega_7 = \bar \omega_6.
\end{equation}
Thus, the resulting ODE, which is derived in detail in \S\ref{CM}, will be eight dimensional.

The reduction to the eight-dimensional ODE is local. In fact, since the size of the spectral gaps for the linear operator, $\nu \Delta$, is $\mathcal{O}(\nu)$, this reduction will only be valid in a small neighborhood of $0$ of size $\mathcal{O}(\nu)$. Moreover, one cannot expect to obtain a finite-dimensional model of the full system \eqref{E:vort} that describes the global dynamics \cite{Zelik14}. However, we will still use this finite-dimensional model to provide insight into the potential role that $\delta$ plays in the selection of bars and dipoles. For other examples in which finite-dimensional models have been used to study the dynamics of the Navier-Stokes equation, see \cite{EMattingly01, MattinglyPardoux14}.

The ODE derived in \S\ref{CM} will be analyzed in \S\ref{SymmetricTorus}-\S\ref{GeomSingPert}. In \S\ref{SymmetricTorus} we focus on the case $\delta = 1$, which corresponds to the symmetric torus, and in \S\ref{AsymmetricTorus} we focus on the case $\delta \neq 1$. In both cases, to study the relative importance of the bar states versus the dipoles, we consider the evolution of the ratio $R(t):=\frac{|\omega_1(t)|^2}{|\omega_3(t)|^2}$. (In the case $\omega_3 = 0$, one can study the inverse of this quantity.) Note that asymptotic convergence of $R(t) \to 0$ or $\infty$ would correspond to convergence to a $y$- or $x$-bar state, respectively, while convergence to some finite, nonzero value would correspond to convergence to a dipole. (We note, however, that if $R(t) \to R_\infty \ll 1$ for example, then such a state would qualitatively appear to be a $y$-bar state, even though there would be nonzero variation in $x$.)

For the case $\delta = 1$, in Theorem \ref{Symmetrictheorem} we show that there is a family of co-dimension one stable manifolds in the phase space of the ODE that determines the asymptotic limit of $R(t)$. The limit $R(t) \to 0$ corresponds to exactly one of these manifolds, and hence a $y$-bar state would only be observed for the special class of initial conditions starting on this manifold. (A similar result holds for the $x$-bar states.) Therefore, we conclude that, for the symmetric torus, general initial conditions will typically lead to the emergence of a dipole as the dominant quasi-stationary state. For the case $\delta \neq 1$, the single center direction that had been present in the system for $\delta = 1$ becomes hyperbolic, with the sign of $\delta - 1$ determining if it is expanding or contracting. Thus, this selects the limit $R(t) \to 0$ or $R(t) \to \infty$, selecting an $y$-bar state or $x$-state respectively. These results are found in Theorem \ref{Asymmetrictheorem}.

In both cases, $\delta = 1$ and $\delta \neq 1$, we additionally show that the high modes decay at the rate $\mathcal{O}(e^{-t/\nu})$, while the low modes decay at the rate $\mathcal{O}(e^{-\nu t})$. These results can be found in Lemma \ref{lem:sym-decay} and Proposition \ref{Brapiddecay}. This allows for the rapid convergence to a metastable state as seen for the bar states in \cite{BeckWayne13, IbrahimMaekawaMasmoudi17}. We note however, that the rapid decay in \cite{BeckWayne13, IbrahimMaekawaMasmoudi17} was $\mathcal{O}(e^{-\sqrt{\nu}t})$, whereas here we obtain decay of the high modes at $\mathcal{O}(e^{-t/\nu})$. We expect that this discrepancy is due to the fact that the ODE we study is only a model of the full PDE. The main point is the qualitative prediction of a separation in time scales, rather than the specific rate. In addition, we comment here that the reason we do not scale the $\nu$ out of the equation

In \S\ref{GeomSingPert}, we reframe the problem as a perturbation problem, so as to confirm the results of the earlier sections using a different method. After setting $\delta=1+\epsilon_0\epsilon$, where $\epsilon_0= \pm 1$ will determine if $\delta$ is less than or greater than 1, relating the parameters $\nu$ and $\delta$ via $\epsilon$,  and scaling the system in an appropriate way, a slow-fast system emerges. Perturbation expansions are then used to
illustrate the claims made in \S\ref{SymmetricTorus}  and \S\ref{AsymmetricTorus}. These expansions also reveal that evolution to a bar state accelerates as $\delta$ is moved slightly farther from 1. This result is consistent with the work of \cite{BouchetSimonnet09} in that the simulations done there suggest that a bar state dominates the metastable stochastic transitions only when $\delta$ is sufficiently far from 1.

Lastly, we conclude the introduction by pointing out that one could in principle scale time and velocity from the very beginning in order to make $\nu=1$. However, our goal here is to investigate the selection mechanism based on the value of $\delta$ and it turns out that there is an interplay with $\nu$ as well. As a matter of fact, estimates like those in Lemma \ref{lem:sym-decay} and Proposition \ref{Brapiddecay} depend on $\nu$ in very precise ways and if one had scaled out $\nu$ in the beginning, one would have to undo the scalings later on. Hence, we chose to keep the dependence on $\nu$ as it is given by the equation originally.


\section{Center Manifold}\label{CM}

 In this section, we carry out a center manifold reduction of \eqref{E:vort} onto the eight modes listed in \eqref{E:fourier}. This is a standard calculation that can be found, for example, in \cite{Henry81}.

The basic idea is, for any $\omega_{\vec{k}}$ with $\vec{k} \notin \{(\pm1, 0), (0, \pm1), (\pm1, \pm1)\} =: \mathcal{K}_0$, to assume that there exists a smooth function $H(\omega_1, \dots, \omega_8; \vec{k})$ such that the eight-dimensional manifold defined by

\[
\mathcal{M} = \{ \hat{\omega}: \hat{\omega}_{\vec{k}} = H(\omega_1, \dots, \omega_8; \vec{k}), \quad \vec{k} \notin \mathcal{K}_0\}
\]
is invariant for the dynamics of \eqref{E:vort}. We refer to this as a center manifold because it is defined in terms of the lowest eight modes, which have the weakest linear decay rates.  Based on this assumption, one can then in principle compute the coefficients of the
Taylor expansion of $H(\cdot, \vec{k})$ to any order, for each $\vec{k}$. To make this precise, define
\begin{equation}
    \begin{aligned}
\hat{\omega}_{\vec{k}} &= (a_1\omega_1^2 + \dots + a_8 \omega_8^2) + (b_{12}\omega_1\omega_2 + \dots b_{18}\omega_1\omega_8) + (c_{23}\omega_2\omega_3 + \dots c_{28}\omega_2\omega_8) \\
&\quad + (d_{34}\omega_3\omega_4 + \dots + d_{38}\omega_3 \omega_8) + (e_{45}\omega_4\omega_5 + \dots e_{48}\omega_4\omega_8) + (f_{56} \omega_5 \omega_6 + \dots + f_{58}\omega_5\omega_8)  \\
& \quad + (g_{67}\omega_6\omega_7 + g_{68}\omega_6\omega_8) + h_{78}\omega_7\omega_8 +\Theta(3)  \\
&=: H(\omega_1, \dots, \omega_8; \vec{k}), \label{E:analytic}
    \end{aligned}
\end{equation}
where $\Theta(3)$ is defined in Definition \ref{Def:ThetaOrder}.
\begin{Definition}\label{Def:ThetaOrder}
Let $n\in\mathbb{N}$ and let $\omega_1,\dots,\omega_8$ be defined as in \eqref{E:fourier}. We define $\Theta(n)$ to be the set of terms that are of the form $\omega_1^{n_1}\omega_2^{n_2}...\omega_8^{n_8}$ where $n_1+...+n_8\geq n$ for $n_1, \dots, n_8 \in \mathbb{N}$. Thus, $\Theta(n)$ is the set of monomials in $\omega_1,\dots,\omega_8$ of degree $n$ or larger.
\end{Definition}

In order to determine the values of the unknown coefficients that appear in \eqref{E:analytic}, we will compute $\partial_t \omega_j$, for $j = 1, \dots, 8$ in two ways: 1) by using the right hand side of equation \eqref{E:vort}, where we substitute $H(\cdot, \vec{k})$ in for $\omega_{\vec{k}}$ whenever $\vec{k} \notin \mathcal{K}_0$; and 2) by computing the time derivative of the expansion in \eqref{E:analytic}, substituting in the equation for $\dot{\omega}_j$, $j = 1, \dots, 8$ given in \eqref{E:vort} as necessary. Equating these two results, and comparing terms with equal order in powers of $\omega_1, \dots, \omega_8$, will lead to equations that should in theory determine the values of the coefficients. Note that the coefficients in \eqref{E:analytic} will depend on $\vec{k}$, but we have suppressed this for notational convenience.

We are only interested in computing the expansion for $H(\cdot, \vec{k})$ up to and including terms of $\Theta(2)$. Therefore, when carrying out the above-described calculation, we will only need to retain terms up to any including $\Theta(3)$. This means any product of the form $\omega_{\vec{k}}\omega_{\vec{j}}$ with $\vec{k}, \vec{j} \notin \mathcal{K}_0$ will be of higher order, and thus we can discard it. Carrying out step 1) above, leads to
\begin{equation}
    \begin{aligned}
        \dot \omega_1 &= -\frac{\nu}{\delta^2}\omega_1 + \frac{1}{\delta(1+\delta^2)}[ \omega_3 \omega_7 - \bar{\omega}_3 \omega_5] + \frac{3\delta}{(4+\delta^2)(1+\delta^2)}[\hat\omega(2,-1)\bar{\omega}_7-\hat\omega(2,1)\bar{\omega}_5] + \Theta(4)   \\
        \dot \omega_3 &= - \nu \omega_3 + \frac{\delta^3}{1+\delta^2}[\bar{\omega}_1\omega_5 - \omega_1\bar{\omega}_7]+ \frac{3\delta^3}{(1+4\delta^2)(1+\delta^2)}[\hat\omega(1,2)\bar{\omega}_5-\hat\omega(-1,2)\omega_7] + \Theta(4) \label{E:lowest4}  \\
        \dot \omega_5 &= -\frac{\nu}{\delta^2}(1+\delta^2)\omega_5 - \frac{(\delta^2-1)}{\delta}\omega_1\omega_3 + \frac{\delta(3+\delta^2)}{4+\delta^2}\hat\omega(2,1)\bar{\omega}_1 -\frac{1+3\delta^2}{\delta(1+4\delta^2)}\hat\omega(1,2)\bar{\omega}_3    \\
         & \quad +\frac{\delta(3-\delta^2)}{2(1+\delta^2)}\hat{\omega}(2,0)\bar{\omega}_7 + \frac{1-3\delta^2}{2\delta(1+\delta^2)}\hat{\omega}(0,2)\omega_7 + \Theta(4)   \\
        \dot \omega_7 &= -\frac{\nu}{\delta^2}(1+\delta^2)\omega_7 + \frac{(\delta^2-1)}{\delta}\omega_1\bar{\omega}_3 + \frac{1+3\delta^2}{\delta(1+4\delta^2)}\hat\omega(1,-2)\omega_3 -\frac{\delta(3+\delta^2)}{4+\delta^2}\hat\omega(2,-1)\bar{\omega}_1   \\
        & \quad + \frac{3\delta^2-1}{2\delta(1+\delta^2)}\hat{\omega}(0,-2)\omega_5 + \frac{\delta(\delta^2-3)}{2(1+\delta^2)}\hat{\omega}(2,0)\bar{\omega}_5 + \Theta(4)
    \end{aligned}
\end{equation}
Note we have listed only four of the equations, due to \eqref{symmetry}. Therefore, we need only focus on determining the coefficients of $H(\cdot, \vec{k})$ for $\vec{k} \in \mathcal{K} := \{(\pm1, \pm 2), (\pm 2, \pm1), (\pm 2, 0), (0, \pm2)\}$.
Carrying out step 2) leads to
\begin{equation}\label{E:method2}
\dot \omega_{\vec{k}} = \nabla H(\omega_1, \dots, \omega_8; \vec{k}) \cdot \left[ -\nu \left(\frac{\omega_1}{\delta^2},\frac{\omega_2}{\delta^2},\omega_3, \omega_4, \frac{1+\delta^2}{\delta^2}\omega_5, \dots, \frac{1+\delta^2}{\delta^2}\omega_8\right) + \Theta(2) \right].
\end{equation}
Equating \eqref{E:lowest4} and \eqref{E:method2} leads to, for example, the following for the $b_{15}$ coefficient of $H(\omega_1, \dots, \omega_8; (2, 1))$:
\[
  - \frac{\nu}{\delta^2} b_{15} -\frac{\nu}{\delta^2}(1+\delta^2) b_{15} = -\frac{\nu}{\delta^2}(4+\delta^2) b_{15} - \frac{\delta^3}{1+\delta^2} \quad \Rightarrow \quad b_{15} = -\frac{\delta^5}{2\nu(1+\delta^2)}.
\]
Continuing in this manner, we find
\begin{equation}
    \begin{aligned}
        \hat\omega(2,1) &= - \frac{\delta^5}{2\nu(1+\delta^2)} \omega_1 \omega_5 + \Theta(3) \\
        \hat\omega(2,-1) &=  \frac{\delta^5}{2\nu(1+\delta^2)} \omega_1 \omega_7 + \Theta(3) \\
        \hat\omega(1,2) &=  \frac{1}{2\nu\delta(1+\delta^2)} \omega_3\omega_5 + \Theta(3) \label{E:highermodes} \\
        \hat\omega(1,-2) &=  -\frac{1}{2\nu\delta(1+\delta^2)} \bar{\omega}_3\omega_7 + \Theta(3).
    \end{aligned}
\end{equation}
Conveniently, most of the coefficients are zero. In order for the above equations to be the unique expansion for the $H$'s that satisfy the invariance condition, we need to restrict the values of $\delta$ to be sufficiently close to $1$. To see why this is the case, consider, for example, the $a_1$ coefficient of $\vec{k}=(2,1)$. Its defining equation is given by
\[
-2\nu\frac{(1+\delta^2)}{\delta^2}a_1 = -\nu\frac{(4+\delta^2)}{\delta^2}a_1
\]
If $\delta = \sqrt{2}$, then $a_1 = 0$ is not the unique solution. To prevent such ambiguities and obtain a unique expansion for
$\hat{\omega}(2,1)$, $\hat{\omega}(2,-1)$, $\hat{\omega}(1,2)$, and $\hat{\omega}(1,-2)$, it turns out that we should restrict the value of $\delta$ to
$(\frac{1}{\sqrt{2}},\sqrt{2})$.

Computing the expansions for $\hat{\omega}(\pm 2,0)$ and $\hat{\omega}(0,\pm 2)$ turns out to be somewhat different.
When $\delta \neq1$, one can check that, if $\sqrt{\frac{2}{3}}<\delta<\sqrt{\frac{3}{2}}$, the coefficients are all unique and equal to 0. However, when $\delta=1$, a unique set of coefficients cannot be determined. In this case, for any constants $\alpha_i$ and $\beta_i$, the following functions will satisfy the invariance condition
\begin{equation}
    \begin{aligned}
        \hat{\omega}(2,0)=G_{(2,0)}(\omega_5,\omega_7)&:=\alpha_1|\omega_5|^2+\alpha_2|\omega_7|^2+\alpha_3(\omega_5)^2 +\alpha_4(\bar{\omega}_5)^2+\alpha_5(\omega_7)^2+\alpha_6(\bar{\omega}_7)^2 \\
        & \qquad+\alpha_7\omega_5\omega_7+\alpha_8\omega_5\bar{\omega}_7+\alpha_9\bar{\omega}_5\omega_7+\alpha_{10}\bar{\omega}_5\bar{\omega}_7 + \Theta(3) \label{E:G} \\
        \hat{\omega}(0,2)=G_{(0,2)}(\omega_5,\omega_7)&:=\beta_1|\omega_5|^2+\beta_2|\omega_7|^2+\beta_3(\omega_5)^2 +\beta_4(\bar{\omega}_5)^2+\beta_5(\omega_7)^2+\beta_6(\bar{\omega}_7)^2  \\
        & \qquad+\beta_7\omega_5\omega_7+\beta_8\omega_5\bar{\omega}_7+\beta_9\bar{\omega}_5\omega_7+\beta_{10}\bar{\omega}_5\bar{\omega}_7 + \Theta(3)
    \end{aligned}
\end{equation}
Note that we have relabeled these functions using the letter $G$, rather than $H$ as above, to emphasize that this is a special case only when $\delta = 1$. Moreover, we have used \eqref{symmetry} to write these as functions of $\omega_{5,7}$ only, for notational convenience and to highlight the fact that they depend only on the high modes. Similar equations for $\hat{\omega}(-2, 0)$ and $\hat{\omega}(0, -2)$ can be found, based on \eqref{E:G}, using \eqref{symmetry}. We will comment more on this issue of nonuniqueness in the $\delta = 1$ case in Remark \ref{R:nonuniqueness}, below. Thus, we arrive at the following eight-dimensional ODE model, for $\delta\neq1$ and $\sqrt{\frac{2}{3}}<\delta<\sqrt{\frac{3}{2}}$
\begin{equation}
    \begin{aligned}
        \dot \omega_1 &= - \frac{\nu}{\delta^2} \omega_1 + \frac{1}{\delta(1+\delta^2)}[\omega_3\omega_7 - \bar{\omega}_3 \omega_5] + \frac{3\delta^6}{2\nu(4+\delta^2)(1+\delta^2)^2}\omega_1(|\omega_5|^2 + |\omega_7|^2)  \\
        \dot \omega_3 &= - \nu \omega_3 + \frac{\delta^3}{(1+\delta^2)}[\bar{\omega}_1\omega_5 - \omega_1 \bar{\omega}_7] + \frac{3\delta^2}{2\nu(1+4\delta^2)(1+\delta^2)^2}\omega_3(|\omega_5|^2 + |\omega_7|^2) \label{E:deltanot1}  \\
        \dot \omega_5 &= - \nu \frac{1+\delta^2}{\delta^2} \omega_5 -\frac{\delta^2-1}{\delta}\omega_1 \omega_3   -\frac{\delta^6(3+\delta^2)}{2\nu(4+\delta^2)(1+\delta^2)} \omega_5 |\omega_1|^2-\frac{1+3\delta^2}{2\nu\delta^2(1+4\delta^2)(1+\delta^2)}\omega_5 |\omega_3|^2    \\
        \dot \omega_7 &= - \nu \frac{1+\delta^2}{\delta^2} \omega_7 +\frac{\delta^2-1}{\delta}\omega_1 \bar{\omega}_3 -\frac{\delta^6(3+\delta^2)}{2\nu(4+\delta^2)(1+\delta^2)} \omega_7 |\omega_1|^2 - \frac{1+3\delta^2}{2\nu\delta^2(1+4\delta^2)(1+\delta^2)}\omega_7 |\omega_3|^2
    \end{aligned}
\end{equation}
and for $\delta = 1$ we find the following eight-dimensional ODE model
\begin{equation}
     \begin{aligned}
         \dot\omega_1 &= -\nu \omega_1 +\frac{1}{2}(\omega_3\omega_7-\bar{\omega}_3\omega_5) + \frac{3}{40\nu}\omega_1(|\omega_5|^2+|\omega_7|^2)  \\
        \dot\omega_3 &= -\nu \omega_3 +\frac{1}{2}(\bar{\omega}_3\omega_5-\omega_1\bar{\omega}_7) + \frac{3}{40\nu}\omega_3(|\omega_5|^2+|\omega_7|^2) \label{E:delta=1}\\
        \dot\omega_5 &= -2\nu \omega_5 - \frac{1}{5\nu}\omega_5(|\omega_1|^2+|\omega_3|^2) +\frac{1}{2} (\bar{\omega}_7 G_{(2,0)}-\omega_7 G_{(0,2)}) \\
        \dot\omega_7 &= -2\nu \omega_7 - \frac{1}{5\nu}\omega_7(|\omega_1|^2+|\omega_3|^2) +\frac{1}{2} (\omega_7 \bar{G}_{(0,2)}-\bar{\omega}_5 G_{(2,0)})
    \end{aligned}
\end{equation}

\begin{Remark}\label{R:nonuniqueness}
We suspect that the nonuniqueness of the expansions in the $\delta = 1$ case is related to the fact that the above calculation is only local and valid in a small neighborhood of $\mathcal{O}(\nu)$. In order to obtain continuity of our ODE model for $\delta \approx 1$, it makes sense to chose $G_{(0,2)} = G_{(2,0)} = 0$ in the $\delta = 1$ case. This makes \eqref{E:deltanot1} equal to \eqref{E:delta=1} in the limit $\delta \to 1$. Therefore, we make this choice in the following sections, and consider only equation \eqref{E:deltanot1} for all $\delta \in  (\sqrt{\frac{2}{3}}, \sqrt{\frac{3}{2}})$.
\end{Remark}

We conclude this section with a quick observation that reflects a symmetry in \eqref{E:vort} and that can be used to simplify some of the proofs in the following sections. Its proof is omitted as it is relatively straighforward.
\begin{Lemma} \label{real}
For any $\delta$, the set $\{\Im(\omega_1)=\Im(\omega_3)=\Im(\omega_5)=\Im(\omega_7)=0\}$ is invariant for \eqref{E:deltanot1}.
\end{Lemma}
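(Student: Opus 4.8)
The claim is that the set where $\omega_1,\omega_3,\omega_5,\omega_7$ are all real is invariant for the ODE \eqref{E:deltanot1}. The natural approach is to verify directly that if, at some time, all four of these variables are real, then their time derivatives are real as well; this shows the vector field is tangent to the proposed set, which is exactly the invariance condition for a closed linear subspace. So the plan is to restrict each right-hand side of \eqref{E:deltanot1} to the locus $\Im(\omega_1)=\Im(\omega_3)=\Im(\omega_5)=\Im(\omega_7)=0$ and check that what remains is real-valued.

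First I would recall from \eqref{symmetry} that on this set $\omega_1=\bar\omega_1=\omega_2$, $\omega_3=\bar\omega_3=\omega_4$, and similarly $\bar\omega_5,\bar\omega_7$ coincide with $\omega_5,\omega_7$. Then I would go through the four equations one at a time. For $\dot\omega_1$: the linear term $-\tfrac{\nu}{\delta^2}\omega_1$ is real since $\omega_1$ is; the quadratic term $\tfrac{1}{\delta(1+\delta^2)}[\omega_3\omega_7-\bar\omega_3\omega_5]$ becomes $\tfrac{1}{\delta(1+\delta^2)}[\omega_3\omega_7-\omega_3\omega_5]$, a product of reals with real coefficient; and the cubic term $\tfrac{3\delta^6}{2\nu(4+\delta^2)(1+\delta^2)^2}\omega_1(|\omega_5|^2+|\omega_7|^2)$ is manifestly real because $|\omega_5|^2$ and $|\omega_7|^2$ are always real and the prefactor and $\omega_1$ are real. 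The same bookkeeping handles $\dot\omega_3$, where $\bar\omega_1\omega_5-\omega_1\bar\omega_7$ collapses to $\omega_1\omega_5-\omega_1\omega_7$. For $\dot\omega_5$ and $\dot\omega_7$, note that the coupling terms $-\tfrac{\delta^2-1}{\delta}\omega_1\omega_3$ (resp. $+\tfrac{\delta^2-1}{\delta}\omega_1\bar\omega_3 = +\tfrac{\delta^2-1}{\delta}\omega_1\omega_3$ on the set) are products of reals, the linear terms are real, and the terms $\omega_5|\omega_1|^2$, $\omega_5|\omega_3|^2$, $\omega_7|\omega_1|^2$, $\omega_7|\omega_3|^2$ with their real coefficients are real. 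Hence all four derivatives are real on the set, so the set is invariant.

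There is essentially no obstacle here; the only thing to be slightly careful about is that the reduction used in Remark \ref{R:nonuniqueness} (taking $G_{(0,2)}=G_{(2,0)}=0$) means we are working with \eqref{E:deltanot1} for all $\delta$ in the admissible range, including the limiting value $\delta=1$, so the single statement covers both cases and there is no need to separately invoke \eqref{E:delta=1}. I would phrase the argument as: the proposed set is a linear subspace of the (real) phase space, and a linear subspace is invariant for an ODE iff the vector field restricted to it takes values in the subspace; the computation above verifies precisely this, using only that sums, products, and $\R$-multiples of real numbers are real, together with the relations \eqref{symmetry}. This is why the paper can reasonably omit the proof as routine.
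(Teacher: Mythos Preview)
Your proposal is correct and is precisely the routine verification the paper has in mind; the paper in fact omits the proof entirely, noting it is ``relatively straightforward,'' and your argument---checking that each right-hand side of \eqref{E:deltanot1} is real when $\omega_1,\omega_3,\omega_5,\omega_7$ are real---is the natural way to fill in that omission.
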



\section{Symmetric Torus}\label{SymmetricTorus}

We now focus on \eqref{E:deltanot1} with $\delta = 1$, which corresponds to the symmetric torus. The goal will be to prove two results: Lemma \ref{decaydelta=1}, which states that the high modes decay much more rapidly than the low modes, and Theorem
\ref{Symmetrictheorem}, which states that most initial conditions will evolve toward a dipole, rather than a bar state.
We begin by rewriting system \eqref{E:deltanot1} with $\delta=1$:
\begin{equation}
    \begin{aligned}
        \dot \omega_1 &= - \nu \omega_1 + \frac{1}{2}[\omega_3\omega_7 - \bar{\omega}_3 \omega_5] + \frac{3}{40\nu}\omega_1(|\omega_5|^2 + |\omega_7|^2)  \\
        \dot \omega_3 &= - \nu \omega_3 + \frac{1}{2}[\bar{\omega}_1\omega_5 - \omega_1 \bar{\omega}_7] + \frac{3}{40\nu}\omega_3(|\omega_5|^2 + |\omega_7|^2) \label{E:symmetric} \\
        \dot \omega_5 &= - 2\nu \omega_5 - \frac{1}{5\nu}\omega_5(|\omega_1|^2+|\omega_3|^2) \\
        \dot \omega_7 &= - 2\nu \omega_7 - \frac{1}{5\nu}\omega_7(|\omega_1|^2+|\omega_3|^2)
    \end{aligned}
\end{equation}
It will be helpful to study the evolution of the quantities $A:=|\omega_1|^2+|\omega_3|^2$ and $B:=|\omega_5|^2+|\omega_7|^2$, in order to separate the evolution of the low modes from that of the high modes, and the quantity $R(t) = \frac{|\omega_1(t)|^2}{|\omega_3(t)|^2}$ to study whether it is a bar state or dipole that can be expected to be observed for large time. Recall that $R(t) \to 0$ corresponds to evolution towards a $y$-bar state, $R(t) \to \infty$ corresponds to evolution to an $x$-bar state, and $R(t) \to R_\infty \in (0, \infty)$ corresponds to convergence to a dipole, as $t\to\infty$.

Simulations of \eqref{E:symmetric} shown in Figure \ref{figAB} below suggest that there is a separation in time scales between the evolution of $A$ and $B$, consistent with previous numerical studies of the full Navier-Stokes equation \cite{Yin}. Moreover, $A$ exhibits an initial period of growth, before it begins to decay. Furthermore, in Figure \ref{figR} we see that a variety of initial values for $R$ lead to solutions that converge to a dipole. These behaviors will be made more precise below, in Lemma \ref{lem:sym-decay} and Theorem \ref{Symmetrictheorem}.
\begin{figure}[h!]
	\centering
	\includegraphics[width=10cm, height=6cm]{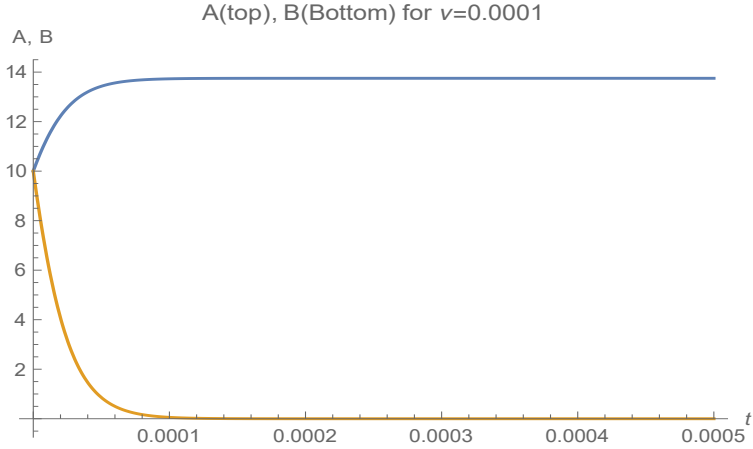}
	\caption{Rapid decay of higher Fourier modes.}
	\label{figAB}
\end{figure}

\begin{figure}[h!]
	\centering
	\includegraphics[width=10cm, height=6cm]{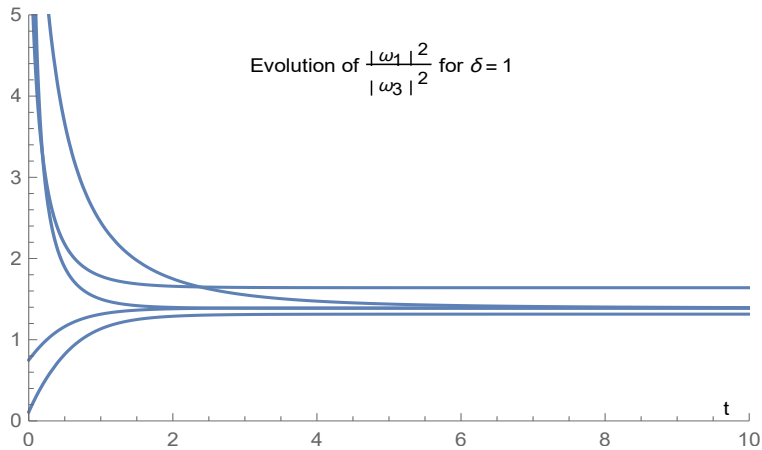}
	\caption{Initial conditions that evolve to a dipole state.}
	\label{figR}
\end{figure}

\begin{Lemma} \label{lem:sym-decay}
\label{decaydelta=1}
	Define  $A(t)$ := $|\omega_1(t)|^2+|\omega_3(t)|^2$ and $B(t):=|\omega_5(t)|^2+|\omega_7(t)|^2$. Let $t_0 = 1/\nu$, $\delta = 1$, and denote the initial data by $A(0) = A_0$ and $B(0) = B_0$. We have
\[
A(t) +B(t) \leq (A_0+B_0)e^{-2\nu t} \qquad \mbox{for all} \qquad t \geq 0.
\]
Moreover, for all $0 \leq t \leq t_0$, $A(t) \geq A_0 e^{-2}$ and $B(t) \leq B_0 e^{-\frac{2A_0}{5\nu e^2}t}$. Finally, for all $t \geq t_0$, $B(t) \leq B_0 e^{-\frac{2A_0}{5\nu^2 e^2}}$.
\end{Lemma}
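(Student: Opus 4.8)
The plan is to reduce everything to two scalar differential equations for $A$ and $B$ (and their sum), after which all three assertions follow from elementary comparison/Grönwall arguments.

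First I would differentiate. Using $\frac{d}{dt}|\omega_j|^2 = 2\Re(\bar\omega_j\dot\omega_j)$ together with \eqref{E:symmetric}, the linear and cubic self-interaction terms contribute $-2\nu A + \frac{3}{20\nu}AB$ to $\dot A$ and $-4\nu B - \frac{2}{5\nu}AB$ to $\dot B$. The only point requiring care is that the quadratic cross terms in $\dot A$ cancel identically: collecting them gives $\Re\big(\bar\omega_1(\omega_3\omega_7-\bar\omega_3\omega_5)\big) + \Re\big(\bar\omega_3(\bar\omega_1\omega_5-\omega_1\bar\omega_7)\big)$, in which the two $\omega_5$-terms cancel outright and the two $\omega_7$-terms are complex conjugates of each other, hence have equal real parts and cancel. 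There are no quadratic terms in $\dot B$. Thus one obtains the exact closed system $\dot A = -2\nu A + \frac{3}{20\nu}AB$, $\dot B = -4\nu B - \frac{2}{5\nu}AB$, valid as long as the solution exists.

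For the first claim I would add these: since $\frac{3}{20\nu} - \frac{2}{5\nu} = -\frac{1}{4\nu} < 0$ and $A,B\geq 0$, one gets $\frac{d}{dt}(A+B) = -2\nu A - 4\nu B - \frac{1}{4\nu}AB \leq -2\nu(A+B)$, and Grönwall yields $A(t)+B(t)\leq (A_0+B_0)e^{-2\nu t}$. (This a priori bound also controls $|\omega_j|$ and hence rules out finite-time blow-up, so the differential inequalities hold for all $t\geq 0$.) For the second claim, dropping the nonnegative term $\frac{3}{20\nu}AB$ gives $\dot A \geq -2\nu A$, so $A(t)\geq A_0 e^{-2\nu t}\geq A_0 e^{-2}$ for $0\leq t\leq t_0 = 1/\nu$; feeding this into $\dot B = -4\nu B - \frac{2}{5\nu}AB \leq -\frac{2}{5\nu}AB \leq -\frac{2A_0}{5\nu e^2}B$ on $[0,t_0]$ and integrating gives $B(t)\leq B_0 e^{-\frac{2A_0}{5\nu e^2}t}$ there. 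Finally, evaluating at $t=t_0=1/\nu$ gives $B(t_0)\leq B_0 e^{-\frac{2A_0}{5\nu^2 e^2}}$, and since both terms of $\dot B$ are nonpositive, $B$ is non-increasing, so $B(t)\leq B(t_0)\leq B_0 e^{-\frac{2A_0}{5\nu^2 e^2}}$ for all $t\geq t_0$, proving the last claim.

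There is no genuinely hard step here; the main thing to get right is the cross-term cancellation in $\dot A$ and the verification that $\frac{3}{20\nu}-\frac{2}{5\nu}<0$, which is precisely what makes $A+B$ behave like a clean Lyapunov functional and makes the chain of scalar comparison estimates go through. Everything downstream is one-dimensional Grönwall and monotonicity.
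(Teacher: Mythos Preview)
Your proof is correct and follows essentially the same route as the paper: derive the closed two-dimensional system $\dot A = -2\nu A + \tfrac{3}{20\nu}AB$, $\dot B = -4\nu B - \tfrac{2}{5\nu}AB$, then obtain all three assertions by elementary Gr\"onwall/comparison arguments and monotonicity of $B$. Your explicit verification of the quadratic cross-term cancellation in $\dot A$ and the remark on global existence are details the paper omits, but the argument is otherwise the same.
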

\begin{proof}
Using \eqref{E:symmetric} we find that the dynamics of $A$ and $B$ are governed by
\begin{equation}
    \begin{aligned}
        \dot A &= -2\nu A +\frac{3}{20\nu}AB \label{E:AB} \\
        \dot B &= -4\nu B - \frac{2}{5\nu}AB
    \end{aligned}
\end{equation}
The first claim follows from the fact that, since $A$ and $B$ are both nonnegative,
\[
\frac{d}{dt}(A + B) = -2\nu (A+B) - 2\nu B - \frac{1}{4\nu}AB \leq - 2\nu(A+B).
\]
Also, since $A$ and $B$ are nonnegative, $\dot A \geq -2\nu A$, and so $A(t) \geq A_0e^{-2\nu t}$. As a result, for all $0 \leq t \leq t_0$, $A(t) \geq A_0 e^{-2}$. With this estimate, we then see that for all $0 \leq t \leq t_0$
\[
\dot B \leq - \left( 4\nu + \frac{2A_0}{5\nu e^2}\right) B \leq - \frac{2A_0}{5\nu e^2}B,
\]
from which the claim about $B$ during this time period follows by Gronwall's Inequality. Finally, note that $\dot B \leq 0$ so $B$ is monotonically decreasing (in fact strictly decreasing as long as $B > 0$), which proves the final claim.
\end{proof}

\begin{Remark}
Regarding the timescales predicted by the above lemma, recall that the center manifold reduction is expected to be valid only in an $\mathcal{O}(\nu)$ neighborhood of $\omega = 0$. If we correspondingly scale $A = \nu^2 \tilde A$ and $B = \nu^2 \tilde B$, then \eqref{E:AB} becomes
\begin{eqnarray*}
\dot{\tilde A} &= -2\nu \tilde A +\frac{3\nu}{20}\tilde A \tilde B \\
\dot{\tilde B} &= -4\nu \tilde B - \frac{2\nu}{5}\tilde A \tilde B.
\end{eqnarray*}
Note, however, that despite this rescaling with $\nu$, and due to the difference in sign of the nonlinear terms, there will  (for most initial data) be an transient period when $\tilde A$ grows, while $\tilde B$ decays, before both $\tilde A$ and $\tilde B$ decay to zero.  We thank an anonymous referee for pointing this out.
\end{Remark}

This proposition shows that, when $\nu$ is small, for a long $\mathcal{O}(1/\nu)$ transient period, $B(t)$ is decaying at the rapid rate $\mathcal{O}(e^{-t/\nu})$, while $A(t)$ is not changing much. After this transient period, $B$ has become exponentially small and both $A$ and $B$ decay to zero at the background rate, $\mathcal{O}(e^{-2\nu t})$. Figures \ref{figlogA} and \ref{figlogB} below show the evolution of $A(t)$ and $B(t)$ on a logarithmic scale. These plots elucidate the results of Lemma \ref{lem:sym-decay}. We see that following the initial period of growth, the curve L=$\log(A(t))$ decreases at a rate very close to $L=-2\nu t$, suggesting that the low modes indeed decay no faster than the global background decay rate. The particular line, $L=\max(A(t))-2\nu t$, that was graphed was an asthetic choice to clearly show the two curves become parallel. The trend can be observed to continue for longer times. Figure \ref{figlogB} illustrates that for $0<t<\frac{1}{\nu}=100$, $L=\log(B(t))$ initially decreases faster than the line $L=-\frac{2A_0}{5\nu e^2}t$, followed by less rapid decay. The point of intersection at $t=120$ was a choice made to most clearly show the change in decay rate of the higher modes once $t>\frac{1}{\nu}$. An analogous result will be shown for $\delta\neq 1$ in  \S\ref{AsymmetricTorus} in Lemma \ref{globaldecay} and Proposition \ref{Brapiddecay}.

\begin{figure}[!htb]
	\centering
	\includegraphics[width=10cm, height=6cm]{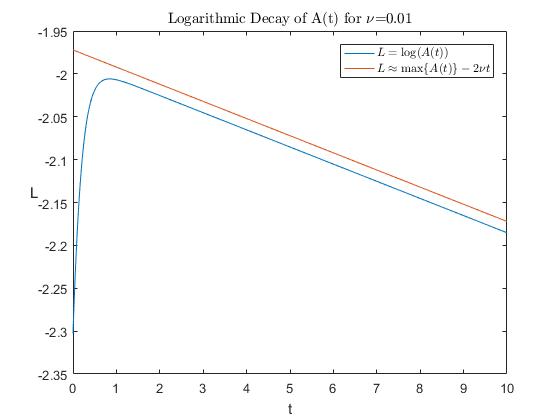}
	\caption{$A(t)=|\omega_1|^2+|\omega_3|^2$ decays at the background decay rate.}
	\label{figlogA}
\end{figure}

\begin{figure}[!htb]
	\centering
	\includegraphics[width=10cm, height=6cm]{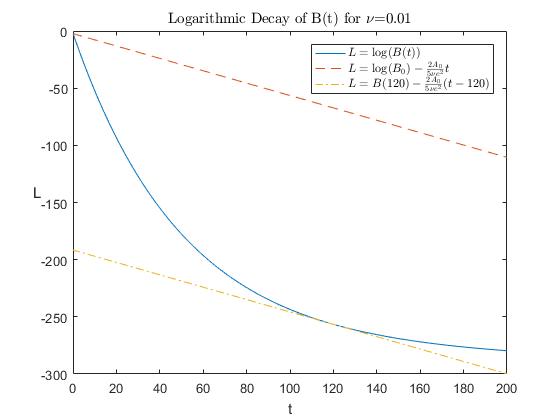}
	\caption{$B(t)=|\omega_5|^2+|\omega_7|^2$ has an initial period of rapid decay.}
	\label{figlogB}
\end{figure}

\begin{Remark} Interestingly, one can check that if the general forms of $G_{(2,0)}$ and $G_{(0,2)}$ given by \eqref{E:G} were used in \eqref{E:symmetric}, $\dot{A}$ and $\dot{B}$ as defined in the statement of Lemma \ref{decaydelta=1} would still satisfy \eqref{E:AB}.  Therefore, the dynamics of $A$ and $B$ are exactly the same regardless of the choice of coefficients for $G_{(2,0)}$ and $G_{(0,2)}$.
\end{Remark}

Next, to precisely describe the behavior exhibited in Figure \ref{figR}, we define the following variables (see also Remark \ref{R:cov}):
\begin{equation}\label{E:changevars}
R = \frac{|\omega_1|^2}{|\omega_3|^2}, \quad A = |\omega_1|^2 + |\omega_3|^2, \quad w = |\omega_5|^2, \quad
z = |\omega_7|^2, \quad P = \frac{\omega_1\bar{\omega}_3\bar{\omega}_7}{|\omega_3|^2}, \quad  Q = \frac{\bar{\omega}_1\bar{\omega}_3\omega_5}{|\omega_3|^2}.
\end{equation}
Equation \eqref{E:symmetric} then implies that
\begin{equation}
    \begin{aligned}
        \dot{R} &= (1+R)(P_{re}-Q_{re}) \\
        \dot{A} &= -2\nu A + \frac{3}{20\nu}A(w+z)  \\
        \dot{w} &= -4\nu w - \frac{2}{5\nu}wA  \\
        \dot{z} &= -4\nu z -\frac{2}{5\nu}zA  \label{E:PQsystem}\\
        \dot{P} &= -2\nu P +\frac{z}{2}(1-R)-\frac{1}{5\nu}PA+P(P_{re}-Q_{re})+\frac{P}{2}(\bar{Q}-\frac{Q}{R}) \\
        \dot{Q} &= -2\nu Q +\frac{w}{2}(R-1)-\frac{1}{5\nu}QA+Q(P_{re}-Q_{re})+\frac{Q}{2}(\frac{P}{R}-\bar{P})
    \end{aligned}
\end{equation}
where $P_{re}=\Re(P)$, $P_{im}=\Im(P)$, $Q_{re}=\Re(Q)$, and $Q_{im}=\Im(Q)$.

\begin{Theorem} \label{Symmetrictheorem}
There exists a family of local stable manifolds, $\mathcal{M}_r=\{R=f(A,w,z,P_{re},P_{im},Q_{re},Q_{im};r)\}$ for $r\geq0$, corresponding to each of the fixed points of the system \eqref{E:PQsystem}. Any initial condition on $\mathcal{M}_r$ will converge at an exponential rate to the fixed point $\{ R=r, A=0, w=0, z=0, P=0, Q=0\}$ as $t\to\infty$. This family, up to and including quadradic terms, is given by
\begin{equation}
    \begin{aligned}
f &= r+\frac{r^2-1}{16\nu^2}w +\frac{r^2-1}{16\nu^2}z - \frac{r+1}{2\nu}P_{re}+\frac{r+1}{2\nu}Q_{re}-\frac{r^2-1}{160\nu^4}Aw-\frac{r^2-1}{160\nu^4}Az +\frac{r+1}{40\nu^3}AP_{re}-\frac{r+1}{40\nu^3}AQ_{re}  \\
&  \qquad +\frac{r^2-1}{768\nu^4r}(7r^2+2r+1)wz -\frac{r+1}{96\nu^3r}(4r^2-r+1)wP_{re} + \frac{r+1}{96\nu^3}(3r+1)wQ_{re}+\frac{r^2-1}{768\nu^4}(3r+2)w^2  \\
&  \qquad -\frac{r+1}{96\nu^3}(3r+1)zP_{re}+\frac{r+1}{96\nu^3r}(4r^2-r+1)zQ_{re} +\frac{r^2-1}{768\nu^4}(3r+2)z^2 -\frac{r^2-1}{8\nu^2r}P_{re}Q_{re}-\frac{(r+1)^2}{8\nu^2r}P_{im}Q_{im}.  \nonumber
    \end{aligned}
\end{equation}
\end{Theorem}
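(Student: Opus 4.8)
The plan is to apply the standard stable manifold theorem to the system \eqref{E:PQsystem} at each of its fixed points, and then extract the quadratic Taylor coefficients of the graph by a formal power series computation. First I would verify that the points $\{R = r, A = w = z = 0, P = Q = 0\}$ are indeed fixed points of \eqref{E:PQsystem} for every $r \geq 0$: the equations for $\dot A, \dot w, \dot z, \dot P, \dot Q$ all vanish there because every term carries a factor of $A$, $w$, $z$, $P$, or $Q$, while $\dot R = (1+R)(P_{re} - Q_{re})$ vanishes since $P = Q = 0$. Thus the fixed points form a one-parameter family, a curve in phase space parametrized by $r$, tangent to the $R$-axis.

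Next I would linearize \eqref{E:PQsystem} about a generic point of this family. Working in the variables $(A, w, z, P_{re}, P_{im}, Q_{re}, Q_{im})$ together with the deviation $R - r$, the Jacobian is (by inspection of the right-hand sides) block triangular: the $(A,w,z)$ block decouples with eigenvalues $-2\nu$ and $-4\nu$ (twice); the $(P,Q)$ block contributes $-2\nu$ along its diagonal plus off-diagonal corrections involving $r$ coming from the $P(P_{re}-Q_{re})$-type and $\frac{P}{2}(\bar Q - Q/R)$-type terms, which vanish at $P=Q=0$ anyway, so those four directions also have linear part $-2\nu I$; and the $R$-direction has eigenvalue $0$, since $\partial_R[(1+R)(P_{re}-Q_{re})] = P_{re} - Q_{re} = 0$ at the fixed point. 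So the fixed point is non-hyperbolic with a one-dimensional center (the $R$-direction, accounted for by moving $r$) and a seven-dimensional stable subspace with all eigenvalues having real part $-2\nu < 0$ (or $-4\nu$). By the center-stable manifold theorem, through each fixed point there is a local stable manifold of dimension seven, and since the center direction is exactly the direction of the family of fixed points, this stable manifold can be written as a graph $R = f(A, w, z, P_{re}, P_{im}, Q_{re}, Q_{im}; r)$ with $f(0,\dots,0;r) = r$ and $\nabla f|_0 = 0$ in the remaining variables (the absence of linear terms in $f$ follows because the center direction is "flat" — it is a curve of equilibria, so the stable foliation has trivial linear coupling). Every orbit on $\mathcal{M}_r$ converges exponentially, at rate essentially $2\nu$, to $\{R = r, A = w = z = P = Q = 0\}$.

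The remaining work is to compute the quadratic part of $f$. Here I would substitute the ansatz $R = f(A, w, z, P_{re}, P_{im}, Q_{re}, Q_{im}; r)$ with $f = r + (\text{linear}) + (\text{quadratic}) + \Theta(3)$ into the invariance identity $\dot R = \nabla f \cdot (\dot A, \dot w, \dot z, \dot P_{re}, \dot P_{im}, \dot Q_{re}, \dot Q_{im})$, using \eqref{E:PQsystem} for all the derivatives, and match terms order by order. At linear order one reads off the coefficients of $w, z, P_{re}, Q_{re}$ (the coefficients of $A, P_{im}, Q_{im}$ vanish since $\dot R$ at linear order only sees $P_{re} - Q_{re}$ and $A$ does not enter $\dot R$ at all); for instance the $w$-coefficient $c_w$ satisfies $-4\nu c_w \cdot 1 = \tfrac12(1-r)\cdot(\text{coefficient from }\dot Q\text{-to-}P_{re}\text{ coupling}) + \dots$, yielding $c_w = (r^2-1)/(16\nu^2)$ after keeping track of how $P_{re}$ and $Q_{re}$ feed into $\dot R$ via $\dot P, \dot Q$. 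At quadratic order one gets a linear system for each quadratic coefficient, solvable because the relevant linear operator (restricted to the quadratic monomials) is invertible — its eigenvalues are sums of two stable eigenvalues minus the center eigenvalue, hence bounded away from zero. The main obstacle is purely bookkeeping: there are $\binom{7+1}{2} = 28$ quadratic monomials, the right-hand sides in \eqref{E:PQsystem} contain products like $P(P_{re} - Q_{re})$, $\frac{P}{2}(\bar Q - Q/R)$, $z(1-R)$, etc., and the $1/R$ terms must be expanded around $R = r$, so the coefficients depend on $r$ in a genuinely rational way (note the $1/r$ and $1/\nu^4$ denominators in the stated answer). I would organize this by first substituting $R = r$ everywhere a quadratic coefficient multiplies a degree-$\geq 2$ monomial, so that the $1/R$-expansions only contribute at the orders being computed, then solve the $28$ scalar equations sequentially. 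I expect this matching computation — not the existence argument — to be where essentially all the effort lies, and I would double-check the final formula against a direct numerical computation of the stable manifold for a few values of $r$ and $\nu$, as the authors indicate is done elsewhere in the paper.
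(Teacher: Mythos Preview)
Your approach is correct and matches the paper's: existence via the standard invariant (stable/center) manifold theorem applied at each fixed point on the line $\{R=r,\,A=w=z=0,\,P=Q=0\}$, followed by a direct Taylor-coefficient matching to extract the graph $R=f(\cdot;r)$. One slip: you assert $\nabla f|_0=0$, but the stable eigenspace is \emph{not} the coordinate hyperplane $\{R=r\}$ --- the Jacobian has off-diagonal entries $\partial \dot R/\partial P_{re}=1+r$, $\partial \dot R/\partial Q_{re}=-(1+r)$, and $\partial \dot P_{re}/\partial z$, $\partial \dot Q_{re}/\partial w$ are nonzero --- so the graph \emph{does} have the linear terms $\frac{r^2-1}{16\nu^2}(w+z)-\frac{r+1}{2\nu}(P_{re}-Q_{re})$ visible in the statement; you in fact compute these correctly a few lines later, so this is only a local inconsistency in the write-up.

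On execution the paper makes one choice you do not: rather than matching coefficients directly in the $(R,A,w,z,P_{re},P_{im},Q_{re},Q_{im})$ variables, it first shifts $R\mapsto R-r$, diagonalizes the Jacobian via an explicit matrix $S$, and computes the manifold as $y_1=h(y_2,\dots,y_8)$ in the eigenbasis (where the graph is genuinely tangent to $\{y_1=0\}$, hence purely quadratic to leading order), then transforms back. This buys a cleaner matching step at the cost of the coordinate change; your direct approach is equivalent but forces you to carry the linear terms through the quadratic computation. The paper also explicitly checks that the $P/R$, $Q/R$ terms in \eqref{E:PQsystem} extend smoothly through the fixed points (via $|PQ/R|=|\omega_5||\omega_7|$), which you should mention since the theorem includes $r\to 0^+$.
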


\begin{proof}
The existence of this family of stable manifolds is given by Theorem 4.1 in \cite{Chicone}.  Before applying this theorem, it must be verified that the nonlinearity of \eqref{E:PQsystem}, as well as its partial derivatives, vanish at each fixed point. To see that no singularities exist, note that the domain of \eqref{E:PQsystem} is determined by the way the variables are defined in \eqref{E:changevars}. It is given by the set

\[\left\{\left(R=\frac{|\omega_1|^2}{|\omega_3|^2},A=|\omega_1|^2 + |\omega_3|^2,w= |\omega_5|^2,z= |\omega_7|^2,P= \frac{\omega_1\bar{\omega}_3\bar{\omega}_7}{|\omega_3|^2},Q=\frac{\bar{\omega}_1\bar{\omega}_3\omega_5}{|\omega_3|^2}\right)\in\mathbb{R}^4\times \mathbb{C}^2:|\omega_3|^2>0 \right\}.\]

The vector field in \eqref{E:PQsystem} has no singular points in this domain since,
\[\left|\frac{PQ}{R}\right|=\frac{|\omega_1|^2||\omega_3|^2|\omega_5||\omega_7|}{|\omega_3|^4}\frac{|\omega_3|^2}{|\omega_1|^2}=|\omega_5||\omega_7|.
\]
Therefore, at any fixed point of the form $\vec{r}=(r,0,0,0,0,0,0,0)$, it can be checked that the nonlinearity and all of its partial derivatives vanish. Thus we obtain the existence of a local invariant (stable) manifold $\mathcal{M}_r$, associated to the flow toward the fixed point at $\vec{r}$. The rest of the proof is a long, but standard, computation of the family $\mathcal{M}_r$ and will be left to the appendix.
\end{proof}

The implication Theorem \ref{Symmetrictheorem} has on the originial system \eqref{E:symmetric} for $\omega_1,$ $\omega_3,$ $\omega_5,$ and $\omega_7$ is that for a typical initial condition, system \eqref{E:symmetric} will not evolve to an $x$-bar ($r = \infty$) or $y$-bar ($r = 0$) state. For most $r \in (0, \infty)$, the asymptotic state will look like a dipole. We note, however, that for $r \ll 1$ or $r \gg 1$ the asymptotic state may appear to be more like a bar state than a dipole, even though it is not a pure bar state. This is again consistent with the numerical results of \cite{BouchetSimonnet09,Yin}.

\begin{Remark} \label{R:cov}
Theorem \ref{Symmetrictheorem} establishes the existence of locally invariant manifolds only for initial conditions to \eqref{E:symmetric} with $|\omega_3|\neq 0$. The same result can be proven for initial conditions to \eqref{E:symmetric} with $|\omega_1|\neq 0$ by taking instead as the change of variables $\mathcal{R}=\frac{|\omega_3|^2}{|\omega_1|^2}$, $\mathcal{P}= \frac{\omega_1\bar{\omega}_3\bar{\omega}_7}{|\omega_1|^2}$, and  $\mathcal{Q} = \frac{\bar{\omega}_1\bar{\omega}_3\omega_5}{|\omega_1|^2}$ with $A$, $w$, and $z$ unchanged. The set $\{ \omega_1 = \omega_3 = 0\}$ is invariant for \eqref{E:symmetric}, so this case is not relevant for the study of the selection of the bars versus dipoles.
\end{Remark}

\begin{Remark}
The invariant manifolds $\mathcal{M}_r$ in Theorem \ref{Symmetrictheorem} are defined locally near each fixed point $\vec{r}$. However, Lemma \ref{lem:sym-decay} implies that, for any initial condition, the solution will eventually enter a neighborhood of this one-dimensional manifold of fixed points, and hence its dynamics will eventually be governed by the theorem.
\end{Remark}

\begin{Remark} If we instead use the more general form of $\hat{\omega}(2,0)$ and $\hat{\omega}(0,2)$ on the center manifold given by \eqref{E:G}, one can check that, although the explicit form of $\mathcal{M}_r$ would change, qualitatively the same structure will prevail. There remains a line of fixed points as the one-dimensional center manifold and a family of stable manifolds consisting of the initial conditions that evolve towards each of these fixed points.
\end{Remark}


\section{Asymmetric Torus}\label{AsymmetricTorus}

In this section we study \eqref{E:deltanot1} with $\delta \neq 1$. In particular, we show in Proposition \ref{Brapiddecay} that again the high modes decay much more rapidly than the low modes, and we show in Theorem \ref{Asymmetrictheorem} that $R(t) \to 0$ as $t \to \infty$ for $\delta < 1$, thus indicating convergence to a $y$-bar state. A similar convergence result to an $x$-bar state exists when $\delta > 1$. We begin by rewriting \eqref{E:deltanot1} for convenience:
\begin{equation}
    \begin{aligned}
        \dot \omega_1 &= - \frac{\nu}{\delta^2} \omega_1 + \frac{1}{\delta(1+\delta^2)}[\omega_3\omega_7 - \bar{\omega}_3 \omega_5] + \frac{3\delta^6}{2\nu(4+\delta^2)(1+\delta^2)^2}\omega_1(|\omega_5|^2 + |\omega_7|^2)  \\
        \dot \omega_3 &= - \nu \omega_3 + \frac{\delta^3}{(1+\delta^2)}[\bar{\omega}_1\omega_5 - \omega_1 \bar{\omega}_7] + \frac{3\delta^2}{2\nu(1+4\delta^2)(1+\delta^2)^2}\omega_3(|\omega_5|^2 + |\omega_7|^2) \label{E:asymmetric}  \\
        \dot \omega_5 &= - \nu \frac{1+\delta^2}{\delta^2} \omega_5 -\frac{\delta^2-1}{\delta}\omega_1 \omega_3   -\frac{\delta^6(3+\delta^2)}{2\nu(4+\delta^2)(1+\delta^2)} \omega_5 |\omega_1|^2-\frac{1+3\delta^2}{2\nu\delta^2(1+4\delta^2)(1+\delta^2)}\omega_5 |\omega_3|^2    \\
        \dot \omega_7 &= - \nu \frac{1+\delta^2}{\delta^2} \omega_7 +\frac{\delta^2-1}{\delta}\omega_1 \bar{\omega}_3 -\frac{\delta^6(3+\delta^2)}{2\nu(4+\delta^2)(1+\delta^2)} \omega_7 |\omega_1|^2 - \frac{1+3\delta^2}{2\nu\delta^2(1+4\delta^2)(1+\delta^2)}\omega_7 |\omega_3|^2.
    \end{aligned}
\end{equation}
For the remainder of this section, we will assume that $\delta$ is fixed with $\delta\in (\sqrt{\frac{2}{3}},\sqrt{\frac{3}{2}})$, $\delta \neq 1$. It will occasionally be useful to fix a sufficiently small value of $\eta > 0$ and require that $|\delta^2-1|<\eta$. For such a fixed $\delta$ and $\eta$, some of our results will hold for all $\nu>0$ sufficiently small.

First, we obtain a global decay rate via the energy
\[
E(t):= \frac{1}{2}(|\omega_1(t)|^2+|\omega_3(t)|^2+|\omega_5(t)|^2+|\omega_7(t)|^2).
\]
 \begin{Lemma}\label{globaldecay}
 For all $t \geq 0$, $E(t)\leq E(0)e^{-2K_1\nu t}$, with $K_1=\min({1,1/\delta^2})$.
\end{Lemma}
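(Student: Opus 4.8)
The plan is to differentiate $E(t)$ along solutions of \eqref{E:asymmetric} and show that all the nonlinear contributions cancel, leaving only the linear decay terms. Concretely, I would compute
\[
\dot E = \Re(\bar\omega_1 \dot\omega_1) + \Re(\bar\omega_3 \dot\omega_3) + \Re(\bar\omega_5 \dot\omega_5) + \Re(\bar\omega_7 \dot\omega_7),
\]
substitute the right-hand sides of \eqref{E:asymmetric}, and organize the resulting terms into three groups: the purely linear terms, the quadratic (advection-type) terms involving $\omega_1\omega_3\bar\omega_5$, $\omega_1\bar\omega_3\bar\omega_7$ and their conjugates, and the cubic terms proportional to $1/\nu$ that couple $|\omega_1|^2,|\omega_3|^2$ with $|\omega_5|^2,|\omega_7|^2$.

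The key observation — which is really the crux — is that both the quadratic and the cubic nonlinear contributions vanish identically when summed. For the quadratic terms, the coefficient in front of $\omega_1\bar\omega_3\bar\omega_7$ appearing in $\Re(\bar\omega_1\dot\omega_1)$ is $\frac{1}{\delta(1+\delta^2)}$, while the coefficient of the matching term $\bar\omega_1\omega_3\omega_7$ in $\Re(\bar\omega_3\dot\omega_3)$ (from $-\omega_1\bar\omega_7$) and in $\Re(\bar\omega_7\dot\omega_7)$ (from $+\frac{\delta^2-1}{\delta}\omega_1\bar\omega_3$) must combine so that $\frac{1}{\delta(1+\delta^2)} = -\frac{\delta^3}{1+\delta^2} + \frac{\delta^2-1}{\delta}$ — wait, one checks this algebraically; in fact the energy-conservation structure of the Euler nonlinearity, inherited from \eqref{E:vort} via the antisymmetry of $\langle\vec j^\perp,\vec l\rangle(|\vec l|_\delta^{-2}-|\vec j|_\delta^{-2})$, guarantees this cancellation. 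Similarly, the cubic $1/\nu$ terms cancel: the positive contribution $\frac{3\delta^6}{2\nu(4+\delta^2)(1+\delta^2)^2}|\omega_1|^2(|\omega_5|^2+|\omega_7|^2)$ from $\dot\omega_1$ is exactly offset by $-\frac{\delta^6(3+\delta^2)}{2\nu(4+\delta^2)(1+\delta^2)}(|\omega_5|^2+|\omega_7|^2)|\omega_1|^2$ from $\dot\omega_5,\dot\omega_7$ — here one uses that $\frac{3\delta^6}{(4+\delta^2)(1+\delta^2)^2}$ and $\frac{\delta^6(3+\delta^2)}{(4+\delta^2)(1+\delta^2)}$ pair up correctly once the $|\omega_5|^2$ and $|\omega_7|^2$ pieces are split and matched — and likewise the $|\omega_3|^2$ cubic terms cancel against each other. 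I would verify these two coefficient identities explicitly; this bookkeeping is the main (though entirely elementary) obstacle, since one must be careful about how $|\omega_5|^2+|\omega_7|^2$ in $\dot\omega_1,\dot\omega_3$ splits to match the separate $|\omega_1|^2$, $|\omega_3|^2$ factors in $\dot\omega_5,\dot\omega_7$.

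Once the nonlinear terms are gone, we are left with
\[
\dot E = -\frac{2\nu}{\delta^2}\cdot\tfrac12|\omega_1|^2 \cdot 2 - 2\nu\cdot\tfrac12|\omega_3|^2\cdot 2 - \frac{2\nu(1+\delta^2)}{\delta^2}\cdot\tfrac12(|\omega_5|^2+|\omega_7|^2)\cdot 2,
\]
i.e. $\dot E = -\frac{2\nu}{\delta^2}|\omega_1|^2 - 2\nu|\omega_3|^2 - \frac{2\nu(1+\delta^2)}{\delta^2}(|\omega_5|^2+|\omega_7|^2)$. Since the four linear rates $\frac{1}{\delta^2}, 1, \frac{1+\delta^2}{\delta^2}, \frac{1+\delta^2}{\delta^2}$ are each at least $\min\{1,1/\delta^2\} = K_1$ (the last two because $1+\delta^2 > \max\{1,\delta^2\}$), we conclude $\dot E \le -2K_1\nu\,(|\omega_1|^2+|\omega_3|^2+|\omega_5|^2+|\omega_7|^2) = -4K_1\nu E$. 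Actually the constant in the statement is $-2K_1\nu$, so more carefully: $\dot E \le -2K_1\nu\cdot 2E = -4K_1\nu E$ would give a factor $2$ improvement; to match the stated bound it suffices to use $\dot E \le -2K_1\nu E$, which follows a fortiori. Gronwall's inequality then yields $E(t)\le E(0)e^{-2K_1\nu t}$, completing the proof. I would also remark, as a sanity check, that setting $\delta = 1$ recovers the $A+B$ bound of Lemma \ref{lem:sym-decay} with $K_1 = 1$.
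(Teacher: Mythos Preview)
Your approach is the same as the paper's --- differentiate $E$, handle the nonlinear terms, bound the linear part by $-2K_1\nu E$, and apply Gronwall --- but two computational slips should be corrected.

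First, the cubic $1/\nu$ terms do \emph{not} cancel identically. If you actually check the coefficient of $|\omega_1|^2(|\omega_5|^2+|\omega_7|^2)$ you find
\[
\frac{3\delta^6}{2\nu(4+\delta^2)(1+\delta^2)^2}-\frac{\delta^6(3+\delta^2)}{2\nu(4+\delta^2)(1+\delta^2)}
=-\frac{\delta^8}{2\nu(1+\delta^2)^2},
\]
and similarly the $|\omega_3|^2(|\omega_5|^2+|\omega_7|^2)$ terms combine to $-\frac{1}{2\nu\delta^2(1+\delta^2)^2}$. So the cubic contribution is strictly negative, not zero. This is exactly what the paper records; since the residual has the right sign it can simply be dropped, so your conclusion survives, but the claim of exact cancellation is wrong and would have been discovered had you carried out the ``bookkeeping'' you deferred. (The quadratic advection terms \emph{do} cancel, as you said.)

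Second, your factor of $2$ in the linear part is spurious: from $E=\tfrac12\sum|\omega_i|^2$ one gets $\dot E=\sum\Re(\bar\omega_i\dot\omega_i)$, and the linear contribution is
\[
-\nu\Bigl(\tfrac{1}{\delta^2}|\omega_1|^2+|\omega_3|^2+\tfrac{1+\delta^2}{\delta^2}(|\omega_5|^2+|\omega_7|^2)\Bigr),
\]
with no extra $2$. Bounding each rate below by $K_1$ gives $\dot E\le -\nu K_1\cdot 2E=-2K_1\nu E$, which is exactly the stated constant --- there is no ``a fortiori'' slack.
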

\begin{proof}
Using (\ref{E:asymmetric}) we see that
\begin{eqnarray}
\dot{E}(t) &=& -\nu\left(\frac{1}{\delta^2}|\omega_1|^2+|\omega_3|^2+\frac{1+\delta^2}{\delta^2}(|\omega_5|^2+|\omega_7|^2)\right) - \frac{\delta^8}{2\nu(1+\delta^2)^2}|\omega_1|^2(|\omega_5|^2+|\omega_7|^2) \nonumber \\
& & - \frac{1}{2\nu\delta^2(1+\delta^2)^2}|\omega_3|^2(|\omega_5|^2+|\omega_7|^2) \nonumber \\
&\leq& -2\nu\min(1,1/\delta^2)E, \nonumber
\end{eqnarray}
which proves the result.
\end{proof}

To obtain a faster decay rate for the higher modes, we again define $A=|\omega_1|^2+|\omega_3|^2$ and $B=|\omega_5|^2+|\omega_7|^2$, and denote the real and imaginary parts of $\omega_i$ as $\omega_i^{re}$ and $\omega_i^{im}$ respectively for $i=1,3,5,7.$ We find using \eqref{E:asymmetric} that
\begin{eqnarray}
\dot A &=& -2\frac{\nu}{\delta^2}|\omega_1|^2 - 2\nu|\omega_3|^2 + \frac{3\delta^2}{\nu(1+\delta^2)^2}\left[\frac{\delta^4}{4+\delta^2}|\omega_1|^2+\frac{1}{1+4\delta^2}|\omega_3|^2\right]B \nonumber \\
& & +2\frac{\delta^2-1}{\delta}[\omega_1^{re}\omega_3^{re}(\omega_7^{re}-\omega_5^{re})+\omega_1^{im}\omega_3^{re}(\omega_7^{im}-\omega_5^{im})-\omega_1^{re}\omega_3^{im}(\omega_5^{im}+\omega_7^{im})+\omega_1^{im}\omega_3^{im}(\omega_5^{re}+\omega_7^{re})] \nonumber \\
\dot B &=& -2\nu\frac{1+\delta^2}{\delta^2}B -\frac{1}{\nu}\left[\frac{\delta^6(3+\delta^2)}{(4+\delta^2)(1+\delta^2)}|\omega_1|^2+\frac{1+3\delta^2}{\delta^2(1+4\delta^2)(1+\delta^2)}|\omega_3|^2\right] B \nonumber \\
& & + 2\frac{\delta^2-1}{\delta}[\omega_1^{re}\omega_3^{re}(\omega_5^{re}-\omega_7^{re})+\omega_1^{im}\omega_3^{re}(\omega_5^{im}-\omega_7^{im})+\omega_1^{re}\omega_3^{im}(\omega_5^{im}+\omega_7^{im})-\omega_1^{im}\omega_3^{im}(\omega^r_5+\omega^r_7)]\nonumber
\end{eqnarray}.

\begin{Proposition} \label{Brapiddecay}Let $A_0 = A(0)$, $B_0 = B(0)$, and $K_1 = \min(1, 1/\delta^2)$. For any fixed $\delta$ and $\eta$ with {$0<|\delta^2-1|<\eta$},  there exists a $\nu^*>0$ such that for all $0<\nu<\nu^*$ there exists a time $t_* = \mathcal{O}(\nu|\log (\nu\eta)|)$ and positive constants $M_0$, $M_1$ and $K_2$, independent of $\nu$, such that
	\[
	B(t) \leq \begin{cases} B_0 e^{-\frac{M_0}{\nu}t} &\mbox{ for } 0 \leq t \leq t_* \\
	M_1 \eta^2\nu^2 e^{-2\nu K_1 t} &\mbox{ for } t_* \leq t.
	\end{cases}
	\]
	Moreover, for all $0 \leq t \leq \min(1/\nu, 1/\eta)$, $A(t) \geq A_0 e^{-K_2}$, and for all $t \geq 0$, $A(t) + B(t) \leq (A_0 + B_0) e^{-2\nu K_1 t}$.
\end{Proposition}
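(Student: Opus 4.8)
The last displayed inequality is immediate: since $A(t)+B(t)=2E(t)$, it is precisely the conclusion of Lemma~\ref{globaldecay}, $2E(t)\le 2E(0)e^{-2K_1\nu t}$. For the remaining two assertions I would work entirely from the scalar differential inequalities for $A$ and $B$ recorded just above the statement, exploiting three structural features: (i) the damping term in $\dot B$ has the form $-\tfrac{1}{\nu}[\,\cdots\,]B$ with $c_0A\le[\,\cdots\,]\le c_0'A$ for constants $0<c_0\le c_0'$ depending only on the compact $\delta$-range; (ii) the cross terms in both $\dot A$ and $\dot B$ are bounded in modulus by $C_3\eta\,A\sqrt B$, using $|\delta^2-1|<\eta$, $ab\le\tfrac12(a^2+b^2)$, and $|\omega_5|+|\omega_7|\le\sqrt{2B}$; (iii) the $B$-term in $\dot A$ is nonnegative. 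The plan is then: bound $A$ from below on a long interval; show $B$ drops at the rate $e^{-t/\nu}$ until it reaches a floor of size $\asymp\eta^2\nu^2$; then show it stays below that floor and moreover keeps decaying at the background rate.

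\emph{Lower bound on $A$.} Discard the nonnegative $B$-term in $\dot A$ (this only increases $\dot A$), bound the cross term by (ii) together with $B(t)\le A_0+B_0$ from Lemma~\ref{globaldecay}, and note $-\tfrac{2\nu}{\delta^2}|\omega_1|^2-2\nu|\omega_3|^2\ge-2\nu\max(1,\delta^{-2})A$. This gives $\dot A\ge-(2\nu\max(1,\delta^{-2})+C_3\eta\sqrt{A_0+B_0})A$, so by Gronwall $A(t)\ge A_0\exp(-(2\nu\max(1,\delta^{-2})+C_3\eta\sqrt{A_0+B_0})t)$; restricting to $0\le t\le\min(1/\nu,1/\eta)$ makes the exponent $\ge-K_2$ with $K_2:=2\max(1,\delta^{-2})+C_3\sqrt{A_0+B_0}$, manifestly independent of $\nu$. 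Thus $A(t)\ge a_0:=A_0e^{-K_2}$ there.

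\emph{Phase 1 and the definition of $t_*$.} Set $B_\sharp:=(2C_3\eta\nu/c_0)^2\asymp\eta^2\nu^2$. While $B\ge B_\sharp$ and $t\le\min(1/\nu,1/\eta)$, the bound $\sqrt B\ge 2C_3\eta\nu/c_0$ forces $\tfrac{1}{\nu}[\,\cdots\,]B\ge\tfrac{c_0A}{\nu}B=\tfrac{c_0A}{\nu}\sqrt B\cdot\sqrt B\ge 2C_3\eta A\sqrt B$, i.e. the damping exceeds twice the cross term in modulus; combined with $A\ge a_0$ this yields $\dot B\le-\tfrac{c_0A}{2\nu}B\le-\tfrac{c_0a_0}{2\nu}B$, so $B(t)\le B_0e^{-c_0a_0t/(2\nu)}$ by Gronwall. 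Define $t_*$ to be the first time $B$ reaches $B_\sharp$ (and $t_*=0$ if $B_0\le B_\sharp$). For $\nu$ small this equals $\tfrac{2\nu}{c_0a_0}\log(B_0/B_\sharp)=\mathcal O(\nu|\log(\nu\eta)|)$, which is $\ll\min(1/\nu,1/\eta)$, so the argument is self-consistent; this proves the first branch of the dichotomy with $M_0:=c_0a_0/2$.

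\emph{Phase 2, and the step I expect to be hardest.} At $B=B_\sharp$ the same computation gives $\dot B\le-\tfrac{c_0A}{2\nu}B_\sharp-2\nu\tfrac{1+\delta^2}{\delta^2}B_\sharp<0$, so $B_\sharp$ is forward invariant and $B(t)\le B_\sharp=\mathcal O(\eta^2\nu^2)$ for all $t\ge t_*$. To upgrade this constant bound to $B(t)\le M_1\eta^2\nu^2e^{-2\nu K_1 t}$, substitute $u=\sqrt B$, which satisfies $\dot u\le-\bigl(\nu\tfrac{1+\delta^2}{\delta^2}+\tfrac{c_0A}{2\nu}\bigr)u+\tfrac{C_3\eta A}{2}$, and run a Gronwall estimate with integrating factor $e^{\Phi}$, $\Phi(t)=\int_{t_*}^t\bigl(\nu\tfrac{1+\delta^2}{\delta^2}+\tfrac{c_0A}{2\nu}\bigr)ds$; the identity $Ae^{\Phi}\le\tfrac{2\nu}{c_0}\tfrac{d}{dt}e^{\Phi}$ is exactly what retains the extra power of $\nu$ in the forcing, while the decay of $A$ from the global bound $A(t)\le(A_0+B_0)e^{-2\nu K_1 t}$ supplies the exponential factor. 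The technical heart is to make these two ingredients cooperate uniformly for all $t\ge t_*$: on the intermediate window where $A$ has already dropped below $\mathcal O(1)$ — so the strong damping $\tfrac{c_0A}{\nu}B$ has itself weakened — but $e^{-2\nu K_1 t}$ has not yet become small enough for the cross forcing to be negligible against the linear damping, a naive estimate loses a factor $\nu^{-1}$. The fix is a continuity/bootstrap argument on $V(t):=B(t)e^{2\nu K_1 t}$: one checks that at any putative first instant where $V=M_1\eta^2\nu^2$ one has $\dot V<0$, using at that instant whichever of the damping contribution $\tfrac{c_0A}{\nu}V$ or the spectral-gap contribution $(\tfrac{1+\delta^2}{\delta^2}-K_1)\nu V>0$ is the larger, and choosing $M_1$ (depending only on $\delta$, $a_0$, and $A_0+B_0$, not on $\nu$) large enough that these two regimes overlap and cover the whole range $t\ge t_*$. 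All constants produced are manifestly independent of $\nu$.
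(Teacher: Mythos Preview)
Your treatment of the global bound, the lower bound on $A$, and Phase~1 of the $B$-estimate follows the paper's proof essentially step for step: the paper uses the same cross-term bound $|{\rm cross}|\le C\eta A\sqrt B$, defines the same threshold $B^*=\mathcal O(\eta^2\nu^2)$ (their $B^*=128\nu^2\eta^2/(\delta^2D_0^2)$ plays the role of your $B_\sharp$), and extracts $t_*$ in exactly your way.

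The divergence is in Phase~2. The paper's argument there is far shorter than what you propose: it simply asserts that Lemma~\ref{globaldecay} gives $B(t)\le B_0e^{-2\nu K_1t}$ for all time, and then ``restarts the system at $t_*$'' to conclude $B(t)\le M_1\eta^2\nu^2e^{-2\nu K_1t}$. No forward-invariance argument, no bootstrap, no competition of damping mechanisms. Your instinct that this is the delicate step is sound: Lemma~\ref{globaldecay} in fact only gives $B(t)\le(A_0+B_0)e^{-2\nu K_1t}$, and restarting at $t_*$ yields $B(t)\le(A(t_*)+B(t_*))e^{-2\nu K_1(t-t_*)}$, which carries an $\mathcal O(1)$ prefactor coming from $A(t_*)$, not $\mathcal O(\eta^2\nu^2)$. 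So the paper's Phase~2 argument, read literally, does not deliver the $\eta^2\nu^2$ prefactor it claims.

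Your more careful route --- forward-invariance of $\{B\le B_\sharp\}$ (which is correct: at $B=B_\sharp$ the nonlinear damping is exactly twice the cross term, independently of $A$), followed by a bootstrap on $V=Be^{2\nu K_1t}$ --- is aimed at exactly this gap. But the closing step you sketch does not work with $M_1$ independent of $\nu$. At the putative crossing $V=M_1\eta^2\nu^2$ one needs $2M_1\nu^2+c_0M_1A>C_3\sqrt{M_1}\,A\,e^{\nu K_1t}$; the second term on the left handles $t<T_1:=(\nu K_1)^{-1}\log(c_0\sqrt{M_1}/C_3)$, the first handles $t>T_2:=(\nu K_1)^{-1}\log(C_3(A_0+B_0)/(2\sqrt{M_1}\nu^2))$, and $T_1\ge T_2$ forces $M_1\gtrsim\nu^{-2}$. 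So neither your bootstrap nor the paper's restart argument actually establishes the stated bound with $M_1$ independent of $\nu$; what both \emph{do} give cleanly is $B(t)\le B_\sharp=\mathcal O(\eta^2\nu^2)$ for $t\ge t_*$ together with $B(t)\le(A_0+B_0)e^{-2\nu K_1t}$ for all $t$, and one may check that this combination already suffices for the integrals in Theorem~\ref{Asymmetrictheorem}, which is the only downstream use.
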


\begin{proof}
By Lemma \ref{globaldecay}, we have $A(t) + B(t) \leq (A_0 +B_0) e^{-2\nu K_1 t}$. Next, notice that
\[
|\omega_5 - \omega_7| \leq |\omega_5| + |\omega_7| \leq \sqrt{2B} \leq \sqrt{2(A+B)} \leq \sqrt{2(A_0 +B_0)} e^{-\nu K_1 t}.
\]
Using the fact that $|\omega_1\omega_3| \leq (|\omega_1|^2 + |\omega_3|^2)/2=A/2$, we have
\[
\dot A \geq -2\nu \mbox{max}\left(1, \frac{1}{\delta^2}\right)A - \frac{\eta}{\delta^2}A|\omega_5-\omega_7| \geq -2\nu \mbox{max}\left(1, \frac{1}{\delta^2}\right)A - 4\sqrt{2}\frac{\eta}{\delta^2}A\sqrt{(A_0+B_0)}.
\]
Thus, for all $t \geq 0$,
\[
A(t) \geq A_0 \mbox{exp}\left[ -\left(2\nu \mbox{max}\left(1, \frac{1}{\delta^2}\right) + 4\sqrt{2}\frac{\eta}{\delta^2}\sqrt{A_0+B_0} \right) t\right].
\]
If we now let $t_1 = \min(1/\nu, 1/\eta)$, then  $A(t) \geq A_0 e^{-K_2}$ for all $0 \leq t \leq t_1$, where
\[
K_2 = \left(2\mbox{max}\left(1, \frac{1}{\delta^2}\right) + 6\sqrt{2}\sqrt{(A_0+B_0)} \right) \geq \left(2\nu \mbox{max}\left(1, \frac{1}{\delta^2}\right) + 4\sqrt{2}\frac{\eta}{\delta^2}\sqrt{(A_0+B_0)} \right) t_1.
\]

Here we used the assumption that $\delta\geq\sqrt{\frac{2}{3}}$. To obtain the estimates for $B$, notice that the above equation for $\dot B$ implies
\[
\dot B \leq -2\frac{\nu}{\delta^2}(1+\delta^2)B + 4\sqrt{2}\frac{\eta}{\delta}A\sqrt{B} - \frac{1}{\nu} D_0 AB
\]
where
\[
D_0 = \min\left(\frac{(1+3\delta^2)}{\delta^2(1+\delta^2)(1+4\delta^2)}, \frac{\delta^6(3+\delta^2)}{(1+\delta^2)(4+\delta^2)}\right).
\]
Since $\delta\in(\sqrt{\frac{2}{3}},\sqrt{\frac{3}{2}})$, $D_0$ is bounded away from 0. Suppose first that
\[
B(t) \geq B^* := \frac{128 \nu^2 \eta^2}{\delta^2D_0^2}.
\]
(If the initial condition $B_0 < B_*$, then it is already asymptotically small and we will consider this case afterwards.) Then we find that
\begin{equation}\label{E:Bineq}
\dot B \leq -2\frac{\nu}{\delta^2}(1+\delta^2)B - \frac{1}{2\nu} D_0 AB \leq -\frac{1}{2\nu} D_0 A_0 e^{-K_2} B.
\end{equation}
Thus, setting $M_0 = (D_0 A_0)/(2e^{K_2})$ we find that $B(t) \leq B_0 e^{-\frac{M_0}{\nu}t}$, at least until $B(t) = B^*$. The latter occurs at a time no larger than $t_*$, where $t_*$ is defined via
\[
B_0 e^{-\frac{M_0}{\nu}t_*} = B_* \qquad \Rightarrow \qquad t_* = -\frac{\nu}{M_0} \log \left( \frac{128\nu^2\eta^2}{B_0 \delta^2 D_0^2}\right) = \mathcal{O}(\nu |\log(\nu\eta)|) \mbox{ as } \nu\to0.
\]	
Finally, consider times for which $B(t) \leq B_*$. By Lemma \ref{globaldecay}, $B(t)\leq B_0 e^{-2\nu K_1 t}$ for all time. Hence, if we restart the system at time $t=t_*$, when $B=\mathcal{O}(\eta^2\nu^2)$, then $B(t)$ decays at the background rate for $t\geq t_*$, i.e. $B(t)\leq M_1\eta^2\nu^2 e^{-2\nu K_1 t}$. This completes the proof.
\end{proof}

\begin{Remark} We comment on the relationship between Proposition 4.2 and Lemma 3.1. Lemma 3.1 is a result in the case $\delta = 1$. In Proposition 4.2, one must begin by fixing $\delta$ and $\eta$ so that $0 < |\delta^2 -1 | < \eta$. Although one can take $\delta$ arbitrarily close to $1$, but fixed, and hence $\eta$ arbitrarily close to zero, but fixed, one cannot take the limits $\delta \to 1$ and $\eta \to 0$ and obtain something meaningful from Proposition 4.2. Moreover, although $t_* \to 0$ as $\nu \to 0$, one must also be careful with this limit, because the bound on $|B(t)|$ also goes to zero in this limit: if we were to choose $t = t_*/2$ in this bound, we would have
 \[
 |B(t_*/2)| \leq B_0 e^{-\mathcal{O}(|\log(\nu\eta)|)},
 \] 
 which goes to zero as $\nu \to 0$. Thus, one really must keep $\delta, \eta$, and $\nu$ fixed when considering Proposition 4.2.

 We further note that the results of Proposition 4.2 do not contradict Lemma 3.1. 
Both Lemma \ref{lem:sym-decay} and Proposition \ref{Brapiddecay} say that for an initial period of time, $B(t)$ is bounded above by a term of the order $O(e^{-\frac{\Lambda}{\nu}t})$, albeit with slightly different constants $\Lambda>0$ in each one of the two cases. After that initial period of time, both results say that $B(t)$ remains small, and to be precise in Lemma \ref{lem:sym-decay} we showed that for $t$ large enough $B(t)$ is bounded above by $e^{-\frac{2A_0}{5\nu^2 e^2}}$, whereas in Proposition \ref{Brapiddecay} we can prove an upper bound of the order $\eta^2\nu^2 e^{-2\nu K_1 t}$.  In the case $\delta\neq 1$, we can only guarantee the upper bounds that appear in Proposition \ref{Brapiddecay}, which however are sufficient for our purposes, as Theorem \ref{Asymmetrictheorem} below demonstrates.
\end{Remark}

Now consider again the variable, $R=\frac{|\omega_1|^2}{|\omega_3|^2}$. Suppose $\delta<1$ and set $\gamma = 2\nu(\frac{1}{\delta^2} -1) \leq C\nu \eta$. We wish to show that $R(t) \sim \mathcal{O}(e^{-\gamma t})$ for all $t \geq 0$, as long as $R(0) =: R_0$ is not too big. Using \eqref{E:asymmetric}, we find
\begin{align}
\dot{R} &= -\gamma R + \frac{3\delta^2}{\nu(1+\delta^2)^2}\left(\frac{\delta^4}{4+\delta^2}-\frac{1}{1+4\delta^2}\right)RB \nonumber \\
& \quad +\frac{2}{\delta(1+\delta^2)}\frac{1}{|\omega_3|^2}\left[\omega_1^{re}\omega_3^{re}(\omega_7^{re}-\omega_5^{re})-\omega_1^{re}\omega_3^{im}(\omega_5^{im}+\omega_7^{re})+\omega_1^{im}\omega_3^{re}(\omega_7^{im}-\omega_5^{im})+\omega_1^{im}\omega_3^{im}(\omega_5^{re}+\omega_7^{re})\right]  \nonumber \\
& \quad +\frac{2\delta^3}{1+\delta^2}R\frac{1}{|\omega_3|^2}\left[\omega_1^{re}\omega_3^{re}(\omega_7^{re}-\omega_5^{re})-\omega_1^{re}\omega_3^{im}(\omega_5^{im}+\omega_7^{re})+\omega_1^{im}\omega_3^{re}(\omega_7^{im}-\omega_5^{im})+\omega_1^{im}\omega_3^{im}(\omega_5^{re}+\omega_7^{re})\right]. \nonumber
\end{align}

\begin{Theorem}\label{Asymmetrictheorem}
For any fixed $\delta < 1$ , sufficiently close to 1, and fixed $\eta$ sufficiently small, with $|\delta^2-1|<\eta$, there exist $R^*, \nu^*>0$ such that, for all $0<\nu\leq\nu^*$ and $R_0 \leq R^*$, $R(t) \leq M_2 e^{-\gamma t}$ for some $M_2 > 0$ and all $t \geq 0$. $R_*$ is $\mathcal{O}(\min(\eta^{-1},\nu^{-1}))$ while $M_2$ is independent of $\nu$ and $\eta$.
\end{Theorem}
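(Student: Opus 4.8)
The plan is to treat the differential inequality for $\dot R$ as a linear equation in $R$ with a small, time-dependent perturbation, and to run a continuity (bootstrap) argument. From the $\dot R$ equation displayed above, the leading term is $-\gamma R$ with $\gamma = 2\nu(\tfrac{1}{\delta^2}-1)>0$, and every other term carries either a factor of $B$ (the high-mode energy) or a factor $\tfrac{1}{|\omega_3|^2}$ multiplied by cubic combinations of the $\omega_i$'s that contain the difference $\omega_5-\omega_7$ or sums $\omega_5+\omega_7$. The key point, established in Proposition \ref{Brapiddecay}, is that $B(t)$ is extremely small after the short transient $t_*=\mathcal{O}(\nu|\log(\nu\eta)|)$, namely $B(t)\le M_1\eta^2\nu^2 e^{-2\nu K_1 t}$, and is bounded by $B_0 e^{-M_0 t/\nu}$ before that. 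So the strategy is: the $RB$ term is harmless because $B$ is integrable with a tiny integral; the terms with $\tfrac{1}{|\omega_3|^2}$ must be controlled by writing $\tfrac{|\omega_1|}{|\omega_3|^2}\cdot|\omega_3| = \sqrt{R}\cdot \tfrac{\sqrt{A}}{\sqrt{1+R}}\cdots$, i.e. by bounding the offending quotients by $\sqrt{R}\,\sqrt{B}$ up to constants (using $|\omega_1|\le\sqrt{A}$, $|\omega_j|\le\sqrt{B}$ for $j=5,7$, and $|\omega_1\omega_3|/|\omega_3|^2 = \sqrt{R}$).

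Concretely, I would first show that on any interval where $R(t)\le R^*$ holds, the $\dot R$ equation yields an inequality of the form
\[
\dot R \le -\gamma R + C_1 \frac{1}{\nu}R B + C_2\big(1+\sqrt{R}\big)\sqrt{R}\,\sqrt{B},
\]
where $C_1,C_2$ depend only on $\delta$ (hence are bounded for $\delta$ in the allowed range). Using $\sqrt{R}\le\sqrt{R^*}$ and $(1+\sqrt{R})\le 2$, the last term is $\le 2C_2\sqrt{R^*}\,\sqrt{R}\,\sqrt{B}\le C_3\sqrt{R^*}\,(R + B)$ by Young's inequality, so
\[
\dot R \le \Big(-\gamma + \tfrac{C_1}{\nu}B + C_3\sqrt{R^*}\Big)R + C_3\sqrt{R^*}\,B.
\]
Since $\gamma\le C\nu\eta$ is itself small, one cannot absorb $C_3\sqrt{R^*}$ into $-\gamma$ by brute force; instead I would split the time axis at $t_*$. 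For $t\in[0,t_*]$, $t_*$ is so short and $B\le B_0$ so that Gronwall gives $R(t)\le R_0 e^{(C_1 B_0/\nu + C_3\sqrt{R^*})t_*}\le R_0 e^{\mathcal{O}(\eta|\log(\nu\eta)|)+\mathcal{O}(\nu|\log(\nu\eta)|)}$, which is at most $2R_0$ for $\nu,\eta$ small — keeping us well inside $R\le R^*$ provided $R^*$ is chosen, as in the statement, of order $\min(\eta^{-1},\nu^{-1})$ (more carefully: one needs $R^*$ large enough that $R_0\le R^*$ is a genuine hypothesis but small enough that $\sqrt{R^*}$ times the relevant constants is still dominated; the stated scaling $R^*=\mathcal{O}(\min(\eta^{-1},\nu^{-1}))$ is what makes $\gamma R^*$ and the cross terms compatible). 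For $t\ge t_*$, $B(t)\le M_1\eta^2\nu^2 e^{-2\nu K_1 t}$, so $\int_{t_*}^\infty \tfrac{C_1}{\nu}B\,ds = \mathcal{O}(\eta^2\nu)$ is negligible, and the forcing term $C_3\sqrt{R^*}\,B(t)$ is of size $\mathcal{O}(\sqrt{R^*}\eta^2\nu^2 e^{-2\nu K_1 t})$; integrating the linear inequality $\dot R\le -\gamma R + (\text{negligible})R + (\text{small forcing})$ via the integrating factor $e^{\gamma t}$ gives $R(t)\le \big(R(t_*) + \mathcal{O}(\sqrt{R^*}\eta^2\nu)\big)e^{-\gamma t}\cdot e^{\mathcal{O}(\eta^2\nu)}$, i.e. $R(t)\le M_2 e^{-\gamma t}$ with $M_2$ depending only on $R_0$ (hence on the data) and not on $\nu,\eta$. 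The bootstrap closes because this bound keeps $R(t)\le R^*$ for all $t$, justifying a posteriori the assumption used to derive the inequality.

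The main obstacle is the term with $\tfrac{1}{|\omega_3|^2}$ that is \emph{not} multiplied by $R$: a priori $|\omega_3|$ could be small independently of $|\omega_1|$, so one cannot naively bound $\tfrac{1}{|\omega_3|^2}\cdot|\omega_1\omega_3\omega_j|$. The resolution is that this combination equals $\sqrt{R}\cdot\tfrac{|\omega_j|}{|\omega_3|}\cdot|\omega_3| = \sqrt{R}\,|\omega_j|$ only after regrouping — more precisely $\tfrac{\omega_1^{re}\omega_3^{re}\omega_5^{re}}{|\omega_3|^2}$ and its siblings are each bounded by $\tfrac{|\omega_1|\,|\omega_5|}{|\omega_3|}$, which is not obviously small. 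One must instead observe that in the $\dot R$ equation these terms always appear in the combination $(\omega_7-\omega_5)$ or paired so that the genuinely dangerous $1/|\omega_3|$ factor cancels; alternatively, and more robustly, one restricts (as Lemma \ref{real} permits) to the invariant real subspace where the algebra simplifies, and notes that on that subspace $\tfrac{|\omega_1||\omega_5|}{|\omega_3|} = \sqrt{R}\,\tfrac{|\omega_5||\omega_3|}{|\omega_3|} $ is a red herring — rather $\tfrac{\omega_1\omega_3\omega_5}{|\omega_3|^2}= \sqrt{R}\cdot\mathrm{sgn}(\cdot)\cdot\tfrac{\omega_5 \omega_3}{|\omega_3|}$, bounded by $\sqrt{R}\,|\omega_5|\le\sqrt{R}\sqrt{B}$, which is exactly the estimate quoted above. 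Making this regrouping precise, and checking that the constants $C_1,C_2,C_3$ stay uniformly bounded for $\delta\in(\sqrt{2/3},\sqrt{3/2})$ (they do, since all denominators $1+\delta^2$, $4+\delta^2$, $1+4\delta^2$ are bounded away from $0$), is the crux; the rest is the two-phase Gronwall/bootstrap bookkeeping sketched above.
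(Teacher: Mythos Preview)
Your overall architecture—derive a differential inequality for $R$, split time at $t_*$, feed in the $B$-bounds from Proposition~\ref{Brapiddecay}, and close by a bootstrap—matches the paper's. Your handling of the $1/|\omega_3|^2$ factors via $|\omega_1|/|\omega_3|=\sqrt{R}$ is also correct (and in fact slightly sharper than the paper's use of $A/|\omega_3|^2=1+R$). But the step where you apply Young's inequality is a genuine gap that breaks the argument.

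After Young you arrive at
\[
\dot R \le \Big(-\gamma + \tfrac{C_1}{\nu}B + C_3\sqrt{R^*}\Big)R + C_3\sqrt{R^*}\,B.
\]
The quantity $C_3\sqrt{R^*}$ sitting in the coefficient of $R$ is a \emph{constant in time}, of order $\max(\eta,\nu)^{-1/2}$ by the stated size of $R^*$, whereas $\gamma=2\nu(\delta^{-2}-1)=\mathcal{O}(\nu\eta)$ is tiny. Hence for $t\ge t_*$ the effective linear rate is $-\gamma+C_3\sqrt{R^*}>0$, and Gronwall yields exponential \emph{growth} of $R$, not decay. Your later sentence ``$\dot R\le -\gamma R + (\text{negligible})R + (\text{small forcing})$'' has simply dropped this term. (The side claim $(1+\sqrt{R})\le 2$ is also wrong, since $R$ may be as large as $R^*\gg1$, but that is minor by comparison.)

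The paper avoids Young entirely. It keeps the factors of $\sqrt{B}$ and $B$ intact, passes to the integral form
\[
R(t)\le e^{-\gamma t}R_0+\int_0^t e^{-\gamma(t-s)}\Big[\big(\tfrac{\eta}{\nu}\beta_1 B+2\sqrt{2}(\beta_2+\beta_3)\sqrt{B}\big)R+2\sqrt{2}\beta_2 R^2\sqrt{B}+2\sqrt{2}\beta_3\sqrt{B}\Big]\,\rmd s,
\]
and works with the weighted norm $|||R|||=\sup_{0\le t\le T}e^{\gamma t}R(t)$. Because $\int_0^\infty\sqrt{B(s)}\,\rmd s$ and $\int_0^\infty \tfrac{\eta}{\nu}B(s)\,\rmd s$ are both $\mathcal{O}(\max(\eta,\nu))$ (split each integral at $t_*$ and use the two regimes of Proposition~\ref{Brapiddecay}), one obtains an algebraic inequality
\[
|||R|||\le R_0 + c\,\max(\eta,\nu)\big(1+|||R|||+|||R|||^2\big),
\]
which closes by choosing $R^*=1/(2c\,\max(\eta,\nu))$. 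The point is that $\sqrt{B}$ is small \emph{in $L^1(0,\infty)$}, so it must remain as a multiplicative factor on $R$ (or $R^2$) all the way through the integral estimate; trading it away for a time-independent constant via Young discards exactly the smallness you need.
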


\begin{proof}
For ease of notation, define
\begin{gather*}
\beta_1 = \frac{3\delta^2}{(\delta^2-1)(1+\delta^2)^2}\left( \frac{\delta^4}{4+\delta^2} - \frac{1}{1+4\delta^2}\right) = \frac{3\delta^2}{(1+\delta^2)^2}\left( \frac{\delta^4 + 2 \delta^2 + 1}{(4+\delta^2)(1+4\delta^2)}\right), \\
\beta_2 = \frac{2\delta^3}{1+\delta^2}, \qquad \beta_3 = \frac{2}{\delta(1+\delta^2)}.
\end{gather*}
With this notation,
\begin{align}
\dot{R} &\leq -\gamma R + \frac{\eta}{\nu}\beta_1 RB + 2\sqrt{2}\beta_3\frac{1}{|\omega_3|^2}A\sqrt{B} + 2\sqrt{2}\beta_2 \frac{1}{|\omega_3|^2}RA\sqrt{B} \nonumber \\
&= -\gamma R +\frac{\eta}{\nu}\beta_1 RB + 2\sqrt{2}\beta_3 (1+R)\sqrt{B}+2\sqrt{2}\beta_2(1+R)R\sqrt{B} \nonumber \\
&= -\gamma R + \left(\frac{\eta}{\nu}\beta_1 B +2\sqrt{2}(\beta_2+\beta_3)\sqrt{B}\right)R + 2\sqrt{2}\beta_2 R^2\sqrt{B}+2\sqrt{2}\beta_3 \sqrt{B}.  \nonumber
\end{align}
Therefore, we have the bound
\begin{eqnarray*}
&& R(t) \leq e^{-\gamma t}R_0 \\
&& \quad \quad + \int_0^t e^{-\gamma(t-s)} \left[ \left(\frac{\eta}{\nu} \beta_1 B(s) + 2\sqrt{2}(\beta_2+\beta_3)\sqrt{B(s)}\right)R(s) +2\sqrt{2}\beta_2 R^2(s)\sqrt{B(s)}+2\sqrt{2}\beta_3 \sqrt{B(s)}  \right] \rmd s.
\end{eqnarray*}
Define
\[
|||R||| := \sup_{0 \leq t \leq T} e^{\gamma t }R(t),
\]
where $T$ is defined to the  largest time such that $e^{\gamma t} R(t) \leq R_*$, and $R_*$ will be determined below. If $|||R|||$ is finite, then $R(t)$ decays like $e^{-\gamma t}$.  Multiplying the integral inequality above by the exponential weight $e^{\gamma t}$, we find that
\begin{eqnarray*}
&& |||R||| \leq R_0 \\
&& \quad + \sup_{0 \leq t \leq T} \int_0^t e^{\gamma s} \left[ \left(\frac{\eta}{\nu} \beta_1 B(s) + 2\sqrt{2}(\beta_2+\beta_3)\sqrt{B(s)}\right)R(s) +2\sqrt{2}\beta_2 R^2(s)\sqrt{B(s)}+2\sqrt{2}\beta_3 \sqrt{B(s)} \right] \rmd s \\
&& \quad \leq R_0 + \sup_{0 \leq t \leq T}(I + II + III).
\end{eqnarray*}
where
\begin{eqnarray*}
I &=& |||R||| \sup_{0 \leq t \leq T} \int_0^t \left(\frac{\eta}{\nu} \beta_1 B(s)  + 2\sqrt{2}(\beta_2+\beta_3)\sqrt{B(s)}\right) \rmd s \\
II &=& |||R|||^2 \sup_{0 \leq t \leq T} \int_0^t 2\sqrt{2}\beta_2 e^{-\gamma s}\sqrt{B(s)} \rmd s \\
III &=&\sup_{0 \leq t \leq T} \int_0^t 2\sqrt{2}\beta_3 e^{\gamma s}\sqrt{B(s)} \rmd s.
\end{eqnarray*}		
We now estimate the three terms above using Proposition \ref{Brapiddecay}, by splitting the time interval into two pieces: $0 \leq t \leq t_*$ and $t_* \leq t$. So Proposition \ref{Brapiddecay} gives for term I,
\begin{eqnarray*}
I &\leq& |||R|||  \int_{0}^{t_*} \left(\frac{\eta }{\nu} \beta_1 B_0e^{-\frac{M_0}{\nu}s} + 2\sqrt{2}(\beta_2 + \beta_3)\sqrt{B_0}e^{-\frac{M_0}{2\nu}s}\right) \rmd s \\
&& \quad + |||R||| \int_{t_*}^{t} \left(M_1 \eta^3\nu e^{-2\nu K_1 s} + \sqrt{M_1}2\sqrt{2}(\beta_2 + \beta_3)\eta\nu e^{-\nu K_1s}\right)  \rmd s  \\
&\leq& |||R||| \left[ \frac{ \eta \beta_1 B_0}{M_0}\left( 1 - e^{-\frac{M_0}{\nu}t_*}\right) + \frac{4\nu\sqrt{2}(\beta_2 + \beta_3)\sqrt{B_0}}{M_0}\left(1-e^{-\frac{M_0}{2\nu}t_*}\right)\right] \\
&& \quad  + |||R||| \left[\frac{M_1 \eta^3}{2K_1}\left(e^{-2\nu K_1 t_*}-e^{-2\nu K_1 t}\right)  +\frac{\eta\sqrt{M_1}2\sqrt{2}(\beta_2 + \beta_3)}{K_1}\left(e^{-\nu K_1 t_*}-e^{-\nu K_1 t}\right)\right] \\
&\leq& C_1( \eta + \eta^3 +  \nu)|||R|||,
\end{eqnarray*}
for some constant $C_1$ that is independent of $\nu$ and $\eta$. Similarly, for term II we have
\begin{align*}
II &\leq 2\sqrt{2}\beta_2 |||R|||^2 \left[ \int_0^{t_*} \sqrt{B_0}e^{-(\frac{M_0}{2\nu} + \gamma) s} \rmd s + \int_{t_*}^{t} \sqrt{M_1} \eta \nu e^{-( \nu K_1 + \gamma)s} \rmd s \right] \\
&\leq 2\sqrt{2}\beta_2|||R|||^2\left[ \frac{2\nu\sqrt{B_0}}{M_0 + 2\nu \gamma}\left( 1 - e^{-(\frac{M_0}{2\nu} + \gamma) t_*}\right) + \frac{\sqrt{M_1}\eta \nu}{\nu K_1+\gamma}\left( e^{-(\nu K_1 + \gamma) t_*} - e^{-(\nu K_1  + \gamma) t} \right) \right] \\
&\leq C_2|||R|||^2(\nu+\eta),
\end{align*}
for some constant $C_2$ that is independent of $\nu$ and $\eta$. Finally, for term III we have
\begin{align*}
III &\leq 2\sqrt{2}\beta_3 \left[ \int_0^{t_*} \sqrt{B_0}e^{-(\frac{M_0}{2\nu}s - \gamma) s} \rmd s + \int_{t_*}^{t} \sqrt{M_1} \eta \nu e^{-(\nu K_1-\gamma) s} \rmd s \right] \\
&\leq 2\sqrt{2}\beta_3 \left[ \frac{2\nu\sqrt{B_0}}{M_0 - 2\nu \gamma}\left( 1 - e^{-(\frac{M_0}{2\nu} - \gamma) t_*}\right) + \frac{\sqrt{M_1}\eta \nu}{\nu K_1-2\gamma}\left( e^{-(\nu K_1 - \gamma) t_*} - e^{-(\nu K_1 - \gamma) t} \right) \right]\nonumber  \\
&\leq C_3(\nu  + \eta ),
\end{align*}
for some constant $C_3$ that is independent of $\nu$ and $\eta$. Combining the above estimates, we find that
\[
|||R||| \leq  R_0 + \mbox{max}(\eta,\nu) C_3  + \mbox{max}(\eta,\nu) C_1|||R||| + \mbox{max}(\eta,\nu) C_2|||R|||^2 .
\]
We can rewrite this as
\[
|||R|||\left( 1 - \mbox{max}(\eta,\nu) C_1 - \mbox{max}(\eta,\nu) C_2 |||R|||\right) \leq R_0 + \mbox{max}(\eta,\nu) C_3.
\]
Now, choose $R_* = 1/(2\mbox{max}(\eta,\nu) C_2)$. We then find
\[
|||R||| \leq \frac{R_0 + \mbox{max}(\eta,\nu) C_0}{1 - \mbox{max}(\eta,\nu) C_1 - \mbox{max}(\eta,\nu) C_2 |||R|||} \leq 4(R_0 + \mbox{max}(\eta,\nu) C_0),
\]
as long as $\nu^*$ and $\eta$ are such that $\max(\eta,\nu^*) \leq \frac{1}{4C_1}$. Using the above definition of $R^*$, the above right hand side will be less than $R^*$ if $R_0 \leq R^*/8$ and $\max(\eta,\nu^*) \leq 1/\sqrt{16C_0C_2}$. In this case,
\[
\sup_{0 \leq t \leq T} e^{\gamma t }|R(t)| = |||R||| \leq R_*.
\]
Because the bound on the right hand side is independent of $T$, the bound must hold for all time. Hence, there must exist an $M_2 > 0$ such that $R(t) \leq M_2 e^{-\gamma t}$ for all $t \geq 0$.
\end{proof}

We emphasize again the importance of the transient period of rapid decay present in the dynamics of the higher Fourier modes established in Proposition \ref{Brapiddecay}. Without their rapid decay to a small enough order during this initial time period, the estimates in the proof of Theorem \ref{Asymmetrictheorem}, particularly for term I, would not have gone through. Moreover, we see that given a small, fixed distance of $\delta$ from 1, a sufficiently small value for the viscosity can be selected to separate the decay rates into the two regimes established in Proposition \ref{Brapiddecay} and used in the proof of Theorem \ref{Asymmetrictheorem}, thus driving the system toward a bar state.

Let us now define $U(t)=R(t)^{-1}$. We see that $U(t)$ must satisfy
\begin{align*}
\dot{U} &= \gamma U - \frac{3\delta^2}{\nu(1+\delta^2)^2}\left(\frac{\delta^4}{4+\delta^2}-\frac{1}{1+4\delta^2}\right)UB \nonumber \\
& \quad -\frac{2}{\delta(1+\delta^2)}U\frac{1}{|\omega_1|^2}\left[\omega_1^{re}\omega_3^{re}(\omega_7^{re}-\omega_5^{re})-\omega_1^{re}\omega_3^{im}(\omega_5^{im}+\omega_7^{re})+\omega_1^{im}\omega_3^{re}(\omega_7^{im}-\omega_5^{im})+\omega_1^{im}\omega_3^{im}(\omega_5^{re}+\omega_7^{re})\right] \nonumber \\
& \quad -\frac{2\delta^3}{1+\delta^2}\frac{1}{|\omega_1|^2}\left[\omega_1^{re}\omega_3^{re}(\omega_7^{re}-\omega_5^{re})-\omega_1^{re}\omega_3^{im}(\omega_5^{im}+\omega_7^{re})+\omega_1^{im}\omega_3^{re}(\omega_7^{im}-\omega_5^{im})+\omega_1^{im}\omega_3^{im}(\omega_5^{re}+\omega_7^{re})\right] \nonumber
\end{align*}

The dynamics of $U$ for $\delta>1$ are analogous to the dynamics of R when $\delta < 1$. With similar estimates to those in the proof of Theorem \ref{Asymmetrictheorem}, one can show that $U(t)\to0$ as $t\to\infty$, which indicates convergence to an $x$-bar state.


\section{Perturbation Analysis}\label{GeomSingPert}
The purpose of this section is to provide further evidence of a selection mechanism through an alternate method. The system's domain will be viewed as a perturbation of the symmetric torus. Using the approximations computed in this section, we will confirm the results of \S \ref{SymmetricTorus} and \S\ref{AsymmetricTorus}. To ultimately view \eqref{E:deltanot1} as a perturbed system, we define the perturbation parameter $\epsilon$ via $\delta=1+\epsilon_0\epsilon$, with $\epsilon_0=\pm1$. Note that the sign of $\epsilon_0$ determines whether $\delta$ is greater or less than one. We begin by scaling $\nu$ as an appropriate power of $\epsilon$, which effectively relates $\nu$ and $\delta$. Subsequently, asymptotic expansions in $\epsilon$ are computed that are connected with the observed multiple time scales in the evolution of the vorticity. The properties of these expansions agree with the results of \S\ref{SymmetricTorus} and \S\ref{AsymmetricTorus}. Moreover, we are again able to observe the evolution to the appropriate bar state depending on if $\epsilon_0 = \pm 1$. These expansions also have the property, at least among the $\mathcal{O}(1)$ and $\mathcal{O}(\epsilon)$ terms, that the emergence of a bar state will become faster as $|\delta - 1| = \epsilon$ increases. This could be related to the observation in \cite{BouchetSimonnet09} that the bar states do not dominate the stochastic evolution (based on the stochastic forcing used there) unless $|\delta - 1|$ is sufficiently large.

Motivated by geometric singular perturbation theory, we first scale \eqref{E:deltanot1} in a way that reveals a slow and a fast subsystem. We must choose appropriate values for the scaling parameters that accomplish two main objectives. First, to reveal a slow-fast system, we aim for the leading order terms in the scaled versions of $\dot{\omega}_1$ and $\dot{\omega}_3$ to be some order of magnitude in $\epsilon$ higher than those in the scaled versions of $\dot{\omega}_5$ and $\dot{\omega}_7$. Second, to ensure that the decay rates in the newly scaled system match those seen in the previous sections, we would like the leading order terms, once scaled, to match the terms in (\ref{E:leadingorder}) below.

\begin{equation}
    \begin{aligned}
    \dot{\omega}_{1} &=  -\frac{\nu}{\delta^2} \omega_{1} + \mbox{h.o.t.}\\
     \dot{\omega}_{3} &=  -\nu \omega_{3} + \mbox{h.o.t.} \label{E:leadingorder}\\
    \dot{\omega}_{5,7} &= -\frac{1}{2\nu(1+\delta^2)}\omega_{5,7}\left(\frac{\delta^6(3+\delta^2)}{(4+\delta^2)}  |\omega_1|^2 + \frac{1+3\delta^2}{\delta^2(1+4\delta^2)} |\omega_3|^2\right) + \mbox{h.o.t.}
    \end{aligned}
\end{equation}
If these correspond to the leading order terms in the scaled system, then the decay rates seen in the asymptotic expansions that are to be computed will match those observed in previous sections.

First scale the viscoscity and time by $\nu = \epsilon^\alpha\nu_0$ and $\tau=\epsilon^{\alpha}t$.  Then scale the Fourier modes by $\omega_1=\epsilon^\beta \Omega_1$, $\omega_3=\epsilon^\beta \Omega_3,$ $\omega_5=\epsilon^\phi \Omega_5$, and $\omega_7=\epsilon^\phi \Omega_7$. The acceptable range of values for the three scaling parameters $\alpha$, $\beta$, and $\phi$ that are relevant to this discussion will now be identified.  The clearest way to do so will be to define $\rho$ and $\sigma$ as

\begin{equation}
    \begin{aligned}
        \sigma &:= \beta - \alpha \\
        \rho &:= \phi - \alpha. \nonumber
    \end{aligned}
\end{equation}

With these initial scalings, the system \eqref{E:initialscale} below is obtained. Note here that the coefficient $\frac{\delta^2-1}{\delta}$, for $\delta=1+\epsilon_0\epsilon$ is $\mathcal{O}(\epsilon)$ as $\epsilon\to0$.
\begin{eqnarray}
        \frac{d}{d\tau} \Omega_1 &=& - \frac{\nu_0}{\delta^2} \Omega_1 + \epsilon^{\rho}\frac{1}{\delta(1+\delta^2)}[\Omega_3\Omega_7 - \bar{\Omega}_3 \Omega_5] + \epsilon^{2\rho}\frac{3\delta^6}{2\nu_0(4+\delta^2)(1+\delta^2)^2}\Omega_1(|\Omega_5|^2 + |\Omega_7|^2) \nonumber \\
        \frac{d}{d\tau} \Omega_3 &=& - \nu_0 \Omega_3 + \epsilon^{\rho}\frac{\delta^3}{(1+\delta^2)}[\bar{\Omega}_1\Omega_5 - \Omega_1 \bar{\Omega}_7] + \epsilon^{2\rho}\frac{3\delta^2}{2\nu_0(1+4\delta^2)(1+\delta^2)^2}\Omega_3(|\Omega_5|^2 + |\Omega_7|^2)\nonumber \\
        \frac{d}{d\tau} \Omega_5 &=& - \nu_0 \frac{1+\delta^2}{\delta^2} \Omega_5 -\epsilon^{(1-\rho+2\sigma)}\frac{\delta^2-1}{\epsilon\delta}\Omega_1 \Omega_3    \label{E:initialscale}\\
        && \qquad \qquad -\epsilon^{2\sigma}\Omega_5\left(\frac{\delta^6(3+\delta^2)}{2\nu_0(4+\delta^2)(1+\delta^2)}  |\Omega_1|^2+\frac{1+3\delta^2}{2\nu_0\delta^2(1+4\delta^2)(1+\delta^2)} |\Omega_3|^2\right) \nonumber    \\
        \frac{d}{d\tau} \Omega_7 &=& - \nu_0 \frac{1+\delta^2}{\delta^2} \Omega_7 +\epsilon^{(1-\rho+2\sigma)}\frac{\delta^2-1}{\epsilon\delta}\Omega_1 \bar{\Omega}_3 \nonumber \\
        && \qquad \qquad -\epsilon^{2\sigma}\Omega_7\left(\frac{\delta^6(3+\delta^2)}{2\nu_0(4+\delta^2)(1+\delta^2)}  |\Omega_1|^2 + \frac{1+3\delta^2}{2\nu_0\delta^2(1+4\delta^2)(1+\delta^2)} |\Omega_3|^2\right)\nonumber
\end{eqnarray}
Looking closely at the powers of $\epsilon$ appearing in \eqref{E:initialscale}, one notices that if the following restrictions on $\sigma$ and $\rho$ hold, then the leading order terms are of the desired form given by (\ref{E:leadingorder}):
\begin{equation}
    \begin{aligned}
        -\frac{1}{2}\leq\sigma<0, \quad \mbox{or }  \alpha-\frac{1}{2}&\leq\beta<\alpha \\
        0\leq\rho<1, \quad \mbox{or } \alpha\leq\phi&<\alpha+1. \nonumber
    \end{aligned}
\end{equation}

\begin{Remark}
As mentioned in the opening paragraph of this section, we aim to compute asymptotic expansions in $\epsilon$ for $\Omega_1$, $\Omega_3$, $\Omega_5$, and $\Omega_7$. If $\beta$ and $\phi$ satisfy the above inequalities, then the expansions computed have the expected properties for any value of $\alpha$.  No restrictions on $\alpha$ frees us from being constrained to particular relative values of the viscocity and aspect ratio of the domain. Namely, after connecting the viscocity and aspect ratio through $\nu=\epsilon^{\alpha}\nu_0$,  the value of $\alpha$ determines which of $\nu$ or $\epsilon$ is larger. Therefore the following results will hold for small values of $\nu$ and $\epsilon$ regardless of which is bigger relative to one another.
\end{Remark}

Proceeding with the computation of the asymptotic expansions, for simplicity, set $\phi=\alpha$ and $\beta=\alpha-\frac{1}{2}$ for any value of $\alpha>1/2$.  With this choice of parameter values, the final scaled system that we work with from this point on can be obtained by substituting $\delta=1+\epsilon_0\epsilon$ into \eqref{E:initialscale} and using the Taylor series expansions of the $\delta$ dependent coefficients. The resulting system is given below by \eqref{E:perturbation}.

\begin{equation}
    \begin{aligned}
        \frac{d}{d\tau}\Omega_1 &= \sum_{j\geq 0}\epsilon^j[-c^1_j\nu_0\Omega_1 + c^2_j(\Omega_3\Omega_7-\bar{\Omega}_3\Omega_5) + \frac{c^3_j}{\nu_0} \Omega_1(|\Omega_5|^2+|\Omega_7|^2)]  \\
        \frac{d}{d\tau}\Omega_3 &= \sum_{j\geq 0}\epsilon^j[-\nu_0\Omega_3 + c^4_j(\bar{\Omega}_1\Omega_5-\Omega_1\bar{\Omega}_7) + \frac{c^5_j}{\nu_0} \Omega_3(|\Omega_5|^2+|\Omega_7|^2)] \label{E:perturbation} \\
        \frac{d}{d\tau}\Omega_5 &= \sum_{j\geq 0}\epsilon^{j}[-\epsilon^{-1}\frac{1}{\nu_0}\Omega_5(c^{6}_j|\Omega_1|^2+c^{7}_j|\Omega_3|^2)-c^{8}_j\nu_0\Omega_5-c^{9}_{j+1}(\Omega_1\Omega_3) ]   \\
        \frac{d}{d\tau}\Omega_7 &= \sum_{j\geq 0}\epsilon^{j}[-\epsilon^{-1}\frac{1}{\nu_0}\Omega_7(c^{6}_j|\Omega_1|^2+c^{7}_j|\Omega_3|^2)-c^{8}_j\nu_0\Omega_7-c^{9}_{j+1}(\Omega_1\bar{\Omega}_3) ],
    \end{aligned}
\end{equation}
where
\[
\sum_{j\geq 0}c^{1}_j\epsilon^j = \frac{1}{\delta^2}, \quad \sum_{j\geq 0}c^{2}_j\epsilon^j = \frac{1}{\delta(1+\delta^2)}, \quad \sum_{j\geq 0}c^{3}_j\epsilon^j = \frac{3\delta^6}{2(4+\delta^2)(1+\delta^2)^2},\quad \sum_{j\geq 0}c^{4}_j\epsilon^j = \frac{\delta^3}{1+\delta^2},
\]
\[
 \sum_{j\geq 0}c^{5}_j\epsilon^j = \frac{3\delta^2}{2(1+4\delta^2)(1+\delta^2)^2}, \quad \sum_{j\geq 0}c^{6}_j\epsilon^j = \frac{\delta^6(3+\delta^2)}{2(4+\delta^2)(1+\delta^2)}, \quad \sum_{j\geq 0}c^{7}_j\epsilon^j = \frac{1+3\delta^2}{2\delta^2(1+4\delta^2)(1+\delta^2)},
 \]
 and
\[
 \quad \sum_{j\geq 0}c^{8}_j\epsilon^j =\frac{1+\delta^2}{\delta^2}, \quad \sum_{j\geq 0}c^{9}_j\epsilon^j = \frac{\delta^2-1}{\delta}.
 \]
With these scalings, it is evident that $\Omega_1$ and $\Omega_3$ are the slow variables, evolving with respect to $\tau$, to leading order, at an $\mathcal{O}(1)$ rate as $\epsilon \to 0$, while $\Omega_5$ and $\Omega_7$ evolve on the faster time scale $\mathcal{O}(\epsilon^{-1})$ as $\epsilon\to0$. This matches the time scale separation we saw present in the previous sections, which can be seen below by reversing the scalings in the leading order terms.

\begin{equation}
    \begin{aligned}
    \epsilon^{-\beta}\omega_{1,3} &= \Omega_{1,3} \sim  e^{-\nu_0\tau}     = e^{-(\epsilon^{-\alpha}\nu)(\epsilon^{\alpha}t)}= e^{-\nu t} \\
    \epsilon^{-\phi}\omega_{5,7} &= \Omega_{5,7} \sim                        e^{-\frac{\epsilon^{-1}}{\nu_0}(|\Omega_1(0)|^2+|\Omega_3(0)|^2)\tau} = e^{-\frac{\epsilon^{\alpha-1}}{\nu}\epsilon^{2\beta}(|\omega_1(0)|^2+|\omega_3(0)|^2)(\epsilon^{\alpha}t)}=e^{-\frac{1}{\nu}(|\omega_1(0)|^2+|\omega_3(0)|^2)t}. \nonumber
    \end{aligned}
\end{equation}

We now proceed using methods from geometric singular perturbation theory. Setting $\epsilon = 0$ in \eqref{E:perturbation} leads to the following leading order slow dynamics
\begin{equation*}
    \begin{aligned}
        \frac{d}{d\tau}\Omega_1 &=  -\nu_0\Omega_1 + \frac{1}{2}(\Omega_3\Omega_7-\bar{\Omega}_3\Omega_5) + \frac{3}{40\nu_0} \Omega_1(|\Omega_5|^2+|\Omega_7|^2) \\
        \frac{d}{d\tau}\Omega_3 &=  -\nu_0\Omega_3 + \frac{1}{2}(\bar{\Omega}_1\Omega_5-\Omega_1\bar{\Omega}_7) + \frac{3}{40\nu_0} \Omega_1(|\Omega_5|^2+|\Omega_7|^2) \\
        0 &= -\frac{1}{5\nu_0}\Omega_5(|\Omega_1|^2+|\Omega_3|^2) \label{E:reducedslow}  \\
        0 &= -\frac{1}{5\nu_0}\Omega_7(|\Omega_1|^2+|\Omega_3|^2),
\end{aligned}	
\end{equation*}
and so the leading order slow manifold is $M_0=\{\Omega_5=\Omega_7=0\}$. Observe that in the perturbed system \eqref{E:perturbation}, this manifold of fixed points for the $\epsilon=0$ reduced slow system is no longer invariant. This can be seen in the differential equations for $\Omega_5$ and $\Omega_7$. However, since $M_0$ is a normally hyperbolic manifold and the vector field in \eqref{E:perturbation} satisfies the smoothness conditions of Fenichel's theorems, a perturbed invariant manifold, $M_\epsilon$, exists for sufficiently small $\epsilon > 0$  and is $\mathcal{O}(\epsilon)$ close to $M_0$. See Theorem 9.1 in \cite{Fenichel1}. Any trajectory in phase space will approach this manifold exponentially fast and then track the slow dynamics on $M_{\epsilon}$.

Defining the fast variable $s=\tau/\epsilon$ and setting $\epsilon = 0$ in \eqref{E:perturbation}, we find the leading order fast dynamics to be given by
\begin{eqnarray*}
\frac{d}{ds}\Omega_1 &=& 0 \nonumber \\
\frac{d}{ds}\Omega_3 &=& 0   \label{E: reducedfast} \\
\frac{d}{ds}\Omega_5 &=&  -\frac{1}{5\nu_0}\Omega_5(|\Omega_1|^2+|\Omega_3|^2) \nonumber \\
\frac{d}{ds}\Omega_7 &=& -\frac{1}{5\nu_0}\Omega_7(|\Omega_1|^2+|\Omega_3|^2). \nonumber
\end{eqnarray*}
Assuming expansions of $\Omega_{i}(s)$ for $i=1,3,5,7$ to be of the form $\Omega_i(s)=\Omega_{i0}(s)+\epsilon\Omega_{i1}(s)+\mathcal{O}(\epsilon^2)$ away from the slow manifold, we find
\begin{eqnarray*}
\Omega_{10} &=& \Omega_{10}(0) \\
\Omega_{30} &=& \Omega_{30}(0)  \\
\Omega_{50} &=& \Omega_{50}(0) e^{-\frac{|\Omega_{10}(0)|^2+|\Omega_{30}(0)|^2}{5\nu_0}s} \\
\Omega_{70} &=& \Omega_{70}(0) e^{-\frac{|\Omega_{10}(0)|^2+|\Omega_{30}(0)|^2}{5\nu_0}s}.
\end{eqnarray*}
Here we see that away from the slow manifold, to leading order, the higher order modes are decaying at a rate $\mathcal{O}\left(e^{-\frac{|\Omega_{10}(0)|^2+|\Omega_{30}(0)|^2}{5\nu_0}s}\right)$, while the lowest modes are constant. This is consistent with the initial rapid decay rates among the higher modes seen in the previous sections, as seen in the calculation below:

\[
\epsilon^{-\phi}\omega_{50,70} = \Omega_{50,70} \sim e^{-\frac{|\Omega_{10}(0)|^2+|\Omega_{30}(0)|^2}{5\nu_0}s} = e^{-\frac{\epsilon^{2\beta}(|\omega_{10}(0)|^2+|\omega_{30}(0)|^2)}{5(\epsilon^{-\alpha}\nu)}(\epsilon^{\alpha-1}t)}=e^{-\frac{|\omega_{10}(0)|^2+|\omega_{30}(0)|^2}{5\nu}t}.
\]

The dynamics on the slow manifold will determine whether solutions evolve towards a bar state or a dipole. To analyze this, we consider again system \eqref{E:perturbation} and compute formal asymptotic expansions of the solutions in terms of the slow variable $\tau$. We begin by writing
\begin{equation}
    \begin{aligned}
        \Omega_1(\tau,\epsilon) &= \Omega_{10}(\tau)+\epsilon\Omega_{11}(\tau)+\epsilon^2\Omega_{12}(\tau)+\mathcal{O}(\epsilon^3)  \\
        \Omega_3(\tau,\epsilon) &= \Omega_{30}(\tau)+\epsilon\Omega_{31}(\tau)+\epsilon^2\Omega_{32}(\tau)+\mathcal{O}(\epsilon^3)  \\
        \Omega_5(\tau,\epsilon) &= \Omega_{50}(\tau)+\epsilon\Omega_{51}(\tau)+\epsilon^2\Omega_{52}(\tau)+\mathcal{O}(\epsilon^3) \label{E:expansion}\\
        \Omega_7(\tau,\epsilon) &= \Omega_{70}(\tau)+\epsilon\Omega_{71}(\tau)+\epsilon^2\Omega_{72}(\tau)+\mathcal{O}(\epsilon^3)
    \end{aligned}
\end{equation}
Next, we compute terms in these expansions up to and including $\mathcal{O}(\epsilon)$ terms. To do so, first consider the $\mathcal{O}(\epsilon^{-1})$ terms present in \eqref{E:perturbation} and match these terms with the derivatives taken in \eqref{E:expansion}. The only terms present are
\begin{equation*}
    \begin{aligned}
        0 &= -\frac{1}{5\nu_0}\Omega_{50}(|\Omega_{10}|^2+|\Omega_{30}|^2) \nonumber \\
        0 &= -\frac{1}{5\nu_0}\Omega_{70}(|\Omega_{10}|^2+|\Omega_{30}|^2) \nonumber
    \end{aligned}
\end{equation*}
and so we find $|\Omega_{50}|^2=|\Omega_{70}|^2=0$. Next, matching the $\mathcal{O}(1)$ terms, we obtain the following system governing the dynamics of the $\mathcal{O}(1)$ terms of $\Omega_1$ and $\Omega_3$, as well as algebraic equations that determine the $\mathcal{O}(\epsilon)$ terms of $\Omega_5$ and $\Omega_7$:
\begin{equation*}
    \begin{aligned}
        \frac{d}{d\tau}\Omega_{10} &= -\nu_0\Omega_{10}  \\
        \frac{d}{d\tau}\Omega_{30} &= -\nu_0\Omega_{30}  \\
        0 &= -\frac{1}{5\nu_0}\Omega_{51}(|\Omega_{10}|^2+|\Omega_{30}|^2)-2\epsilon_0\Omega_{10}\Omega_{30} \label{E:slowtimeODE0} \\
        0 &= -\frac{1}{5\nu_0}\Omega_{71}(|\Omega_{10}|^2+|\Omega_{30}|^2)+2\epsilon_0\Omega_{10}\bar{\Omega}_{30}
    \end{aligned}
\end{equation*}
Solving these we obtain the following expressions:

\begin{equation}
    \begin{aligned}
        \Omega_{10} &= \Omega_{10}(0)e^{-\nu_0\tau}, \qquad\qquad\qquad \Omega_{30} = \Omega_{30}(0)e^{-\nu_0\tau}  \\
        \Omega_{51} &= -\frac{10\nu_0\epsilon_0\Omega_{10}(0)\Omega_{30}(0)}{|\Omega_{10}(0)|^2+|\Omega_{30}(0)|^2}, \quad \Omega_{71} = \frac{10\nu_0\epsilon_0\Omega_{10}(0)\bar{\Omega}_{30}(0)}{|\Omega_{10}(0)|^2+|\Omega_{30}(0)|^2}  \label{E:slowtimeexpansion0}
    \end{aligned}
\end{equation}
Yet to be computed are the $\mathcal{O}(\epsilon)$ terms for the lower modes, $\Omega_1$ and $\Omega_3$.  The relevant equations are
\begin{equation}
    \begin{aligned}
        \frac{d}{d\tau}\Omega_{11} &= -\nu_0 \Omega^r_{11}+\frac{1}{2}(\Omega_{30}\Omega_{71}-\bar{\Omega}_{30}\Omega_{51})+2\nu_0\epsilon_0\Omega_{10}  \\
        \frac{d}{d\tau}\Omega_{31} &= -\nu_0 \Omega_{31}+\frac{1}{2}(\bar{\Omega}_{10}\Omega_{51}-\Omega_{10}\bar{\Omega}_{71}).  \label{E:slowtimeODE1}
    \end{aligned}
\end{equation}
Using \eqref{E:slowtimeexpansion0} to solve \eqref{E:slowtimeODE1} we obtain
\begin{equation}
    \begin{aligned}
        \Omega_{11} &= \Omega_{11}(0)e^{-\nu_0 \tau} +\nu_0\epsilon_0\tau e^{-\nu_0 \tau}\left[2\Omega_{10}(0)+\frac{10|\Omega_{30}(0)|^2\Omega_{10}(0)}{|\Omega_{10}(0)|^2+|\Omega_{30}(0)|^2}\right]  \\
        \Omega_{31} &= \Omega_{31}(0)e^{-\nu_0 \tau} -\nu_0\epsilon_0\tau e^{-\nu_0 \tau}\frac{10|\Omega_{10}(0)|^2\Omega_{30}(0)}{|\Omega_{10}(0)|^2+|\Omega_{30}(0)|^2}. \label{E:slowtimeexpansion1}
    \end{aligned}
\end{equation}

Together, equations \eqref{E:slowtimeexpansion0} and \eqref{E:slowtimeexpansion1} make the approximations to $\Omega_1$ and $\Omega_3$ up to and including $\mathcal{O}(\epsilon)$, which will be denoted by $\bar{\Omega}_1$ and $\bar{\Omega}_3$:
\begin{equation*}
\begin{aligned}
\bar{\Omega}_1(\tau) &:= \Omega_{10}(0)e^{-\nu_0\tau} + \epsilon \left(\Omega_{11}(0)e^{-\nu_0 \tau} +\nu_0\epsilon_0\tau e^{-\nu_0 \tau}\left[2\Omega_{10}(0)+\frac{10|\Omega_{30}(0)|^2|\Omega_{10}(0)|^2}{|\Omega_{10}(0)|^2+|\Omega_{30}(0)|^2}\right]\right)  \\\bar{\Omega}_3(\tau) &:= \Omega_{30}(0)e^{-\nu_0\tau} + \epsilon\left(\Omega_{31}(0)e^{-\nu_0 \tau} -\nu_0\epsilon_0\tau e^{-\nu_0 \tau}\frac{10|\Omega_{10}(0)|^2|\Omega_{30}(0)|^2}{|\Omega_{10}(0)|^2+|\Omega_{30}(0)|^2} \right). \label{E:ExpOrderEpsilon}
\end{aligned}
\end{equation*}
Of interest will be the magnitudes of the scaled low modes, which will be defined by $X:=|\Omega_1|^2$ and $Y:=|\Omega_3|^2$. In computing these, we obtain
\begin{equation}
\begin{aligned}
X(\tau) &= X_0(0)e^{-2\nu_0\tau} + \epsilon\left[ X_1(0)e^{-2\nu_0\tau}+\epsilon_0\nu_0\tau e^{-2\nu_0\tau}\left(4 X_0(0)+\frac{20X_0(0)Y_0(0)}{X_0(0)+Y_0(0)}\right)\right]  + \mathcal{O}(\epsilon^2)\\
Y(\tau) &= Y_0(0)e^{-2\nu_0\tau} + \epsilon\left[Y_1(0)e^{-2\nu_0\tau}-\frac{20X_0(0)Y_0(0)}{X_0(0)+Y_0(0)}\epsilon_0\nu_0\tau e^{-2\nu_0\tau}\right] + \mathcal{O}(\epsilon^2), \label{E:magnitudes}
\end{aligned}
\end{equation}
where we have used the notation $X(\tau)=X_0(\tau)+\epsilon X_1(\tau)+\mathcal{O}(\epsilon^2)$ and $Y(\tau)=Y_0(\tau)+\epsilon Y_1(\tau)+\mathcal{O}(\epsilon^2)$. For notational convenience, let $K_{x_0,y_0}=\frac{20X_0(0)Y_0(0)}{X_0(0)+Y_0(0)}$.

Now take $\bar{X}:=X_0+\epsilon X_1$ and $\bar{Y}:=Y_0+\epsilon Y_1$ to be the approximations to $|\Omega_1|^2$ and $|\Omega_3|^2$. For each $0<\epsilon\ll1$, there exists a finite interval of time on which these approximations are valid. Define $\tau_{+}$ and $\tau_{-}$ as follows
\begin{eqnarray}
        \tau_{+} &=& \frac{1}{\nu_0K_{x_0,y_0}}\left(\frac{Y_0(0)}{\epsilon}+Y_1(0)\right) \nonumber \\
        \tau_{-}&=& \frac{1}{\nu_0(K_{x_0,y_0}+4X_0(0))}\left(\frac{X_0(0)}{\epsilon}+X_1(0)\right) \label{E:critT}.
\end{eqnarray}
Recall that, as approximations to the nonnegative quantities $|\Omega_1|^2$ and $|\Omega_3|^2$,  $\bar{X}$ and $\bar{Y}$ must too be nonnegative. Observe that for $\epsilon_0=1$,   $\bar{X}(\tau_{+})=0$ and $\bar{X}(\tau)<0$ on $\tau>\tau_{+}$.  For $\epsilon_0=-1$, this property is shared by $\bar{Y}$ on $\tau\geq\tau_-$. These properties indicate that the approximations are certainly not valid for values of $\tau$ beyond $\tau_\pm.$

As a direct consequence of these observations, on a finite time interval $\epsilon_0$ will determine the bar state toward which the system evolves. This is summarized by Proposition \ref{perturbationbars} below and is consistent with the results in \S\ref{AsymmetricTorus}. The key property of the approximations $X(\tau)$ and $Y(\tau)$ in \eqref{E:magnitudes} is the linear growth exhibited by the $\mathcal{O}(\epsilon)$ terms. In particular, the opposite sign on the $\epsilon_0\nu_0\tau e^{-2\nu_0 \tau}$ terms allows for the evolution toward the correct bar state. This linear growth originates from the resonant forcing terms in the differential equations for $\Omega_{11}$ and $\Omega_{31}$, which can be seen after substituting the expressions for $\Omega_{10}$ and $\Omega_{30}$ given in \eqref{E:slowtimeexpansion0} into \eqref{E:slowtimeODE1}.

\begin{Proposition}
\label{perturbationbars}
Let $0<\epsilon\ll1$. Consider the approximations to $|\Omega_1|^2$ and $|\Omega_3|^2$ up to $\mathcal{O}(\epsilon)$ given by \eqref{E:magnitudes},
\begin{eqnarray*}
\bar{X}(\tau,\epsilon) &:=& X_0(\tau)+\epsilon X_1(\tau)  \\
\bar{Y}(\tau,\epsilon) &:=& Y_0(\tau)+\epsilon Y_1(\tau).
\end{eqnarray*}
There exists positive times $\tau_{+}$ and $\tau_{-}$, defined by \eqref{E:critT}, for which, when $\epsilon_0=1$, $\lim\limits_{\tau\rightarrow\tau_{+}}\frac{\bar{X}(\tau)}{\bar{Y}(\tau)}=\infty$, indicating evolution to an x-bar state, and, when $\epsilon_0=-1$, $\lim\limits_{\tau\rightarrow\tau_{-}}\frac{\bar{X}(\tau)}{\bar{Y}(\tau)}=0$, indicating evolution to a y-bar state. The critical times $\tau_{+}$ and $\tau_{-}$ are $\mathcal{O}(1/\epsilon)$ as $\epsilon\to0$.

\end{Proposition}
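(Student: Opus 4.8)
The plan is to reduce the claim to an elementary calculation with the closed-form expressions already derived in \eqref{E:magnitudes}, together with a sign analysis identifying $\tau_\pm$ as the first breakdown times of the approximations. First I would substitute the formulas $X_0(\tau)=X_0(0)e^{-2\nu_0\tau}$, $X_1(\tau)=X_1(0)e^{-2\nu_0\tau}+\epsilon_0\nu_0\tau e^{-2\nu_0\tau}\big(4X_0(0)+K_{x_0,y_0}\big)$, and the analogous ones for $Y_0,Y_1$, into $\bar X=X_0+\epsilon X_1$ and $\bar Y=Y_0+\epsilon Y_1$, and factor out the common, never-vanishing factor $e^{-2\nu_0\tau}$, so that
\[
\bar X(\tau)=e^{-2\nu_0\tau}\,p(\tau),\qquad \bar Y(\tau)=e^{-2\nu_0\tau}\,q(\tau),
\]
where $p$ and $q$ are \emph{affine} in $\tau$:
\[
p(\tau)=X_0(0)+\epsilon X_1(0)+\epsilon\epsilon_0\nu_0\big(4X_0(0)+K_{x_0,y_0}\big)\tau,\qquad q(\tau)=Y_0(0)+\epsilon Y_1(0)-\epsilon\epsilon_0\nu_0 K_{x_0,y_0}\,\tau.
\]
Since the exponential cancels in the ratio, $\bar X/\bar Y=p/q$, and the entire statement becomes a question about a ratio of two affine functions.

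Next I would treat the two cases. For $\epsilon_0=1$, $p$ has positive slope, hence $p(\tau)>0$ for all $\tau\ge 0$ (using the nondegeneracy $X_0(0)>0$ and $\epsilon$ small enough that $X_0(0)+\epsilon X_1(0)>0$), while $q$ has negative slope; solving $q(\tau)=0$ gives precisely $\tau=\tau_+$ as in \eqref{E:critT}, with $q(\tau)>0$ on $[0,\tau_+)$. Therefore $\bar Y(\tau)\to 0^+$ while $\bar X(\tau)$ stays bounded below by a positive constant as $\tau\to\tau_+^-$, giving $\bar X/\bar Y\to+\infty$, i.e. evolution toward an $x$-bar state. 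The case $\epsilon_0=-1$ is the mirror image: now $q$ has positive slope and stays positive, while $p$ has negative slope and vanishes exactly at $\tau=\tau_-$, staying positive on $[0,\tau_-)$, so $\bar X/\bar Y\to 0$, i.e. evolution toward a $y$-bar state. Positivity of $\tau_\pm$ themselves follows from the same assumptions $X_0(0),Y_0(0)>0$ (equivalently $K_{x_0,y_0}>0$), and the $\mathcal{O}(1/\epsilon)$ assertion is read off directly from \eqref{E:critT}, whose bracketed factors are $\tfrac{Y_0(0)}{\epsilon}+Y_1(0)$ and $\tfrac{X_0(0)}{\epsilon}+X_1(0)$.

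There is no serious analytic obstacle; the content is the algebraic identity that the slope of $q$ in the $\epsilon_0=1$ case (and of $p$ in the $\epsilon_0=-1$ case) is exactly matched to the prefactor in the definitions \eqref{E:critT}, which is what forces the relevant approximation to vanish at $\tau_\pm$ and not earlier. The only points needing a word of care are: (i) choosing $\epsilon$ small enough that the leading parts $X_0(0)+\epsilon X_1(0)$ and $Y_0(0)+\epsilon Y_1(0)$ retain the correct sign, so that $p,q$ behave as claimed on all of $[0,\tau_\pm)$; and (ii) observing that one is computing a one-sided limit $\tau\to\tau_\pm^-$ along the interval where $\bar X,\bar Y$ remain nonnegative, consistent with the discussion preceding the proposition that the approximations cease to be meaningful beyond $\tau_\pm$.
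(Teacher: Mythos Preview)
Your proposal is correct and follows essentially the same approach as the paper: factor out the common $e^{-2\nu_0\tau}$ from $\bar X$ and $\bar Y$, reduce $\bar X/\bar Y$ to a ratio of affine functions of $\tau$, and read off the vanishing of the denominator (resp.\ numerator) at $\tau_+$ (resp.\ $\tau_-$) via the sign of the slope determined by $\epsilon_0$. Your treatment is slightly more careful in spelling out the smallness condition on $\epsilon$ and the one-sided nature of the limit, but the argument is otherwise the same.
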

\begin{proof}
Using the expressions given by \eqref{E:magnitudes}, consider the ratio $\frac{\bar{X}}{\bar{Y}}$ as it is a measure of how close the system is to one bar state or another.
\begin{equation}
\begin{aligned}
\frac{\bar{X}(\tau)}{\bar{Y}(\tau)} &= \frac{X_0(0)e^{-2\nu_0 \tau} + \epsilon[ X_1(0)e^{-2\nu_0\tau} + \epsilon_0\nu_0(K_{x_0,y_0}+4X_0(0))\tau  e^{-2\nu_0\tau}]}{Y_0(0)e^{-2\nu_0 \tau} + \epsilon[ Y_1(0)e^{-2\nu_0\tau} - \epsilon_0\nu_0 K_{x_0,y_0}\tau e^{-2\nu_0\tau}]} \\
&= \frac{X_0(0)+\epsilon[X_1(0)+\epsilon_0\nu_0(K_{x_0,y_0}+4X_0(0))\tau]}{Y_0(0)+\epsilon[Y_1(0)-\epsilon_0\nu_0K_{x_0,y_0}\tau]} \label{E:ratio}
\end{aligned}
\end{equation}
Observe that, if $\epsilon_0=1$  the denominator of the ratio in \eqref{E:ratio} decreases monotonically and vanishes at $\tau_{+}= \frac{1}{\nu_0K_{x_0,y_0}}\left(\frac{Y_0(0)}{\epsilon}+Y_1(0)\right)$, and if $\epsilon_0=-1$ then the numerator decreases monotonically and vanishes at $\tau_{-}=\frac{1}{\nu_0(K_{x_0,y_0}+4X_0(0))}\left(\frac{X_0(0)}{\epsilon}+X_1(0)\right)$. Hence for $\epsilon_0=1$, we have $\lim\limits_{\tau\rightarrow\tau_{+}}\frac{\bar{X}(\tau)}{\bar{Y}(\tau)}=\infty$ (indicating an x-bar state) while for $\epsilon_0=-1$, we have $\lim\limits_{\tau\rightarrow\tau_{-}}\frac{\bar{X}(\tau)}{\bar{Y}(\tau)}=0$ (indicating a y-bar state).
\end{proof}

This proposition illustrates that as time $\tau$ increases in the finite interval on which the approximations in \eqref{E:magnitudes} are valid, the bar state toward which $\frac{\bar{X}}{\bar{Y}}$ tends is determined solely by $\epsilon_0$. The fact that $\tau^\pm = \mathcal{O}(1/\epsilon)$ implies that, for larger $\epsilon$, there will be more rapid convergence to the bar state. This could be related to the results of \cite{BouchetSimonnet09}, which suggest that $|\delta-1| = \epsilon$ needs to be sufficiently large before a bar state will dominate the dynamics under appropriate stochastic perturbations.


\section{Acknowledgements}
The authors wish to thank the anonymous referees for their helpful comments. M.B was partially supported by National Science Foundation (NSF) grant DMS 1411460, and K.S. was partially supported by NSF DMS 1550918.


 \section{Appendix}\label{Appendix1}
\textit{Completion of proof of Theorem \ref{Symmetrictheorem}}\\
We have already showed the existence of a family of local invariant manifolds for \eqref{E:PQsystem}. Here we compute the form that this family takes. We begin by splitting \eqref{E:PQsystem} into real and imaginary parts to get the following 8 dimensional real system of ODEs

\begin{equation}
    \begin{aligned}
        \dot{R} &= (1+R)(P_{re}-Q_{re}) \\
        \dot{A} &= -2\nu A + \frac{3}{20\nu}A(w+z)  \\
        \dot{w} &= -4\nu w - \frac{2}{5\nu}wA  \\
        \dot{z} &= -4\nu z -\frac{2}{5\nu}zA \label{E:PQsplit}\\
        \dot{P}_r &= -2\nu P_{re} +\frac{z}{2}(1-R)-\frac{1}{5\nu}P_{re}A+(P_{re}-Q_{re})P_{re}+\frac{1}{2}P_{re}Q_{re}(1-\frac{1}{R})+\frac{1}{2}P_{im} Q_{im}(1+\frac{1}{R}) \\
        \dot{P}_i &= -2\nu P_{im} -\frac{1}{5\nu}P_{im}A+(P_{re}-Q_{re})P_{im}+\frac{1}{2}P_{im}Q_{re}(1-\frac{1}{R})-\frac{1}{2}P_{re}Q_{im}(1+\frac{1}{R})\\
        \dot{Q}_r &= -2\nu Q_{re} +\frac{w}{2}(R-1)-\frac{1}{5\nu}Q_{re}A+(P_{re}-Q_{re})Q_{re}+\frac{1}{2}P_{re}Q_{re}(\frac{1}{R}-1)-\frac{1}{2}P_{im}Q_{im}(\frac{1}{R}+1) \\
        \dot{Q}_i &= -2\nu Q_{im} -\frac{1}{5\nu}Q_{im}A+(P_{re}-Q_{re})Q_{im}+\frac{1}{2}P_{im}Q_{re}(\frac{1}{R}+1)+\frac{1}{2}P_{re}Q_{im}(\frac{1}{R}-1).
    \end{aligned}
\end{equation}
Observe that the above system has a line of equilibrium points $\vec{r}=(r,0,0,0,0,0,0,0)$. The Jacobian of \eqref{E:PQsplit} at each of these equilibrium points has 7 negative eigenvalues and 1 zero eigenvalue. Thus for each point on the line $\vec{r}$, there is an associated 7 local dimensional stable manifold, $W^s(\vec{r})$, and 1 dimensional center manifold, $W^{c}(\vec{r})$, which is in fact globally defined and corresponds to the line $\vec{r}$ itself. Below we will proceed to explicitly compute this family of stable manifolds, denoted $\mathcal{M}_r$, up to and including the quadradic terms. This is a long and tedious calculation but comes from standard invariant manifold theorems from ODE, see Theorem 4.1 in \cite{Chicone}. Since the linearization of \eqref{E:PQsplit} has 7 negative eigenvalues and 1 zero eigenvalue, this theorem guarantees the existence of an 8 dimensional center-stable manifold associated to each fixed point. As we can identify the line of equilibrium points, $\vec{r}$, as the globally defined center manifold, we indeed know that the following computation will result in the codemension 1 stable manifold, i.e. the center-stable manifold is the union of $M_r$ and $\vec{r}$. We begin the derivation by first shifting coordinates to move the equilibrium point at $(r,0,0,0,0,0,0,0)$ to the origin.
\[
\left( \begin{array}{c}
\tilde{R} \\
\tilde{A} \\
\tilde{w} \\
\tilde{z} \\
\tilde{P}_r \\
\tilde{P}_i \\
\tilde{Q}_r \\
\tilde{Q}_i \\
\end{array} \right)
=
\left( \begin{array}{c}
R-r \\
A \\
w \\
z \\
P_{re} \\
P_{im} \\
Q_{re} \\
Q_{im} \\
\end{array} \right).
\]

The resulting system with fixed point the fixed point at the origin corresponding to $\vec{r}$ is
\begin{equation}
    \begin{aligned}
        \dot{\tilde{R}} &= (1+\tilde{R}+r)(\tilde{P}_r-\tilde{Q}_r) \\
        \dot{\tilde{A}} &= -2\nu \tilde{A} + \frac{3}{20\nu}\tilde{A}(\tilde{w}+\tilde{z})\\
        \dot{\tilde{w}} &= -4\nu \tilde{w} - \frac{2}{5\nu}\tilde{w}\tilde{A} \label{E:PQshifted} \\
        \dot{\tilde{z}} &= -4\nu \tilde{z} -\frac{2}{5\nu}\tilde{z}\tilde{A}  \\
        \dot{\tilde{P}}_r &= -2\nu \tilde{P}_r +\frac{\tilde{z}}{2}(1-\tilde{R}-r)-\frac{1}{5\nu}\tilde{P}_r\tilde{A}+(\tilde{P}_r-\tilde{Q}_r)\tilde{P}_r+\frac{1}{2}\tilde{P}_r\tilde{Q}_r(1-\frac{1}{\tilde{R}+r})+\frac{1}{2}\tilde{P}_i\tilde{Q}_i(1+\frac{1}{\tilde{R}+r}) \\
        \dot{\tilde{P}}_i &= -2\nu \tilde{P}_i -\frac{1}{5\nu}\tilde{P}_i\tilde{A}+(\tilde{P}_r-\tilde{Q}_r)\tilde{P}_i+\frac{1}{2}\tilde{P}_i\tilde{Q}_r(1-\frac{1}{\tilde{R}+r})-\frac{1}{2}\tilde{P}_r\tilde{Q}_i(1+\frac{1}{\tilde{R}+r}) \\
        \dot{\tilde{Q}}_r &= -2\nu \tilde{Q}_r +\frac{\tilde{w}}{2}(\tilde{R}+r-1)-\frac{1}{5\nu}\tilde{Q}_r\tilde{A}+(\tilde{P}_r-\tilde{Q}_r)\tilde{Q}_r+\frac{1}{2}\tilde{P}_r\tilde{Q}_r(\frac{1}{\tilde{R}+r}-1)-\frac{1}{2}\tilde{P}_i\tilde{Q}_i(\frac{1}{\tilde{R}+r}+1)  \\
        \dot{\tilde{Q}}_i &= -2\nu \tilde{Q}_i -\frac{1}{5\nu}\tilde{Q}_i\tilde{A}+(\tilde{P}_r-\tilde{Q}_r)\tilde{Q}_i+\frac{1}{2}\tilde{P}_i\tilde{Q}_r(\frac{1}{\tilde{R}+r}+1)+\frac{1}{2}\tilde{P}_r\tilde{Q}_i(\frac{1}{\tilde{R}+r}-1).
    \end{aligned}
\end{equation}

The Jacobian of \eqref{E:PQshifted} at the origin is
\[ J(\vec{0})=
\left( \begin{array}{cccccccc}
0 & 0 & 0 & 0 & 1+r & 0 & -(1+r) & 0 \\
0 & -2\nu & 0 & 0 & 0 & 0 & 0 & 0 \\
0 & 0 & -4\nu & 0  & 0 & 0 & 0 & 0 \\
0 & 0 & 0 & -4\nu & 0 & 0 & 0 & 0 \\
0 & 0 & 0 & \frac{1}{2}(1-r) & -2\nu & 0 & 0 & 0 \\
0 & 0 & 0 & 0 & 0 & -2\nu & 0 & 0 \\
0 & 0 & \frac{1}{2}(r-1) & 0 & 0 & 0 & -2\nu & 0 \\
0 & 0 & 0 & 0 & 0 & 0 & 0 & -2\nu
\end{array} \right).
\]

Let us define
\[
\mathbb{X}=
\left( \begin{array}{c}
\tilde{R} \\
\tilde{A} \\
\tilde{w} \\
\tilde{z} \\
\tilde{P}_r \\
\tilde{P}_i \\
\tilde{Q}_r \\
\tilde{Q}_i
\end{array} \right), \quad J=J(\vec{0}),
\]
\[
N(\mathbb{X})=
\left( \begin{array}{c}
\tilde{R}(\tilde{P}_r-\tilde{Q}_r) \\
\frac{3}{20\nu}\tilde{A}(\tilde{w}+\tilde{z}) \\
-\frac{2}{5\nu}\tilde{w}\tilde{A} \\
-\frac{2}{5\nu}\tilde{z}\tilde{A} \\
-\frac{1}{2}\tilde{z}\tilde{R}-\frac{1}{5\nu}\tilde{A}\tilde{P}_r +(\tilde{P}_r-\tilde{Q}_r)\tilde{P}_r + \frac{1}{2}\tilde{P}_r\tilde{Q}_r(1-\frac{1}{\tilde{R}+r})+\frac{1}{2}\tilde{P}_i\tilde{Q}_i(1+\frac{1}{\tilde{R}+r}) \\
-\frac{1}{5\nu}\tilde{A}\tilde{P}_i +(\tilde{P}_r-\tilde{Q}_r)\tilde{P}_i + \frac{1}{2}\tilde{P}_i\tilde{Q}_r(1-\frac{1}{\tilde{R}+r})-\frac{1}{2}\tilde{P}_r\tilde{Q}_i(1+\frac{1}{\tilde{R}+r}) \\
\frac{1}{2}\tilde{w}\tilde{R}-\frac{1}{5\nu}\tilde{A}\tilde{Q}_r +(\tilde{P}_r-\tilde{Q}_r)\tilde{Q}_r + \frac{1}{2}\tilde{P}_r\tilde{Q}_r(\frac{1}{\tilde{R}+r}-1)-\frac{1}{2}\tilde{P}_i\tilde{Q}_i(1+\frac{1}{\tilde{R}+r}) \\
-\frac{1}{5\nu}\tilde{A}\tilde{Q}_i +(\tilde{P}_r-\tilde{Q}_r)\tilde{Q}_i + \frac{1}{2}\tilde{P}_i\tilde{Q}_r(1+\frac{1}{\tilde{R}+r})+\frac{1}{2}\tilde{P}_r\tilde{Q}_i(\frac{1}{\tilde{R}+r}-1) \\
\end{array} \right).
\]

Then we can write the system as
\[ \dot{\mathbb{X}}=J\mathbb{X}+N(\mathbb{X}). \]

Before computing the stable manifold we must change variables and diagonalize the matrix $J$. The matrix of eigenvectors of $J$ and its inverse are given by
\[
S=
\left( \begin{array}{cccccccc}
1 & 0 & \frac{r+1}{4\nu} & -\frac{r+1}{4\nu}&-\frac{r+1}{2\nu} & 0 & \frac{r+1}{2\nu} & 0 \\
0 & 1 & 0 & 0 & 0 & 0 & 0 & 0 \\
0 & 0 & -\frac{4\nu}{r-1} & 0 & 0 & 0 & 0 & 0 \\
0 & 0 & 0 & \frac{4\nu}{r-1} & 0 & 0 & 0 & 0 \\
0 & 0 & 0 & 1 & 1 & 0 & 0 & 0 \\
0 & 0 & 0 & 0 & 0 & 1 & 0 & 0 \\
0 & 0 & 1 & 0 & 0 & 0 & 1 & 0 \\
0 & 0 & 0 & 0 & 0 & 0 & 0 & 1 \\
\end{array} \right), \qquad
S^{-1}=
\left( \begin{array}{cccccccc}
1 & 0 & -\frac{r^2-1}{16\nu^2} & -\frac{r^2-1}{16\nu^2} &\frac{r+1}{2\nu} & 0 & -\frac{r+1}{2\nu} & 0 \\
0 & 1 & 0 & 0 & 0 & 0 & 0 & 0 \\
0 & 0 & -\frac{r-1}{4\nu} & 0 & 0 & 0 & 0 & 0 \\
0 & 0 & 0 & \frac{r-1}{4\nu} & 0 & 0 & 0 & 0 \\
0 & 0 & 0 & -\frac{r-1}{4\nu} & 1 & 0 & 0 & 0 \\
0 & 0 & 0 & 0 & 0 & 1 & 0 & 0 \\
0 & 0 & \frac{r-1}{4\nu} & 0 & 0 & 0 & 1 & 0 \\
0 & 0 & 0 & 0 & 0 & 0 & 0 & 1 \\
\end{array} \right)
\]

So we have
\[
\left( \begin{array}{cccccccc}
0 & 0 & 0 & 0 & 0 & 0 & 0 & 0\\
0 & -2\nu & 0 & 0 & 0 & 0 & 0 & 0 \\
0 & 0 & -4\nu & 0 & 0 & 0 & 0 & 0 \\
0 & 0 & 0 & -4\nu & 0 & 0 & 0 & 0\\
0 & 0 & 0 & 0 & -2\nu & 0 & 0 & 0 \\
0 & 0 & 0 & 0 & 0 & -2\nu & 0 & 0 \\
0 & 0 & 0 & 0 & 0 & 0 & -2\nu & 0 \\
0 & 0 & 0 & 0 & 0 & 0 & 0 & -2\nu
\end{array} \right)
= \Delta = S^{-1}JS.
\]

Defining
 \[\mathbb{Y}=S^{-1}\mathbb{X},\]
the dynamics of $\mathbb{Y}$ are given by
\begin{align*}
\dot{\mathbb{Y}} &= S^{-1}JS\mathbb{Y}+ S^{-1}N(S\mathbb{Y})= \Delta\mathbb{Y}+S^{-1}N(S\mathbb{Y}).
\end{align*}

So now we have
\[S
\left( \begin{array}{c}
y_1 \\
y_2 \\
y_3 \\
y_4 \\
y_5 \\
y_6 \\
y_7 \\
y_8
\end{array} \right)
=
\left( \begin{array}{c}
y_1 + \frac{r+1}{4\nu}y_3 - \frac{r+1}{4\nu}y_4 - \frac{r+1}{2\nu}y_5 + \frac{r+1}{2\nu}y_7 \\
y_2 \\
-\frac{4\nu}{r-1}y_3 \\
\frac{4\nu}{r-1}y_4 \\
y_4+y_5 \\
y_6 \\
y_3+y_7 \\
y_8
\end{array} \right).
\]

For notational convenience, let $\lambda=y_1 + \frac{r+1}{4\nu}y_3 - \frac{r+1}{4\nu}y_4 - \frac{r+1}{2\nu}y_5 + \frac{r+1}{2\nu}y_7$. Then, we have

$N(S\mathbb{Y})$=
\[
\left( \begin{array}{c}
\lambda(y_4+y_5-y_3-y_7) \\
\frac{3}{5(r-1)}y_2(y_4-y_3) \\
\frac{8}{5(r-1)}y_2y_3 \\
-\frac{8}{5(r-1)}y_2y_4 \\
-\frac{2\nu}{r-1}\lambda y_4-\frac{1}{5\nu}y_2(y_4+y_5) +(y_4+y_5-y_3-y_7)(y_4+y_5) +\frac{1}{2}(y_4+y_5)(y_3+y_7)(1-\frac{1}{\lambda+r})+\frac{1}{2}y_6y_8(1+\frac{1}{\lambda+r}) \\
-\frac{1}{5\nu}y_2y_6 +y_6(y_4+y_5-y_3-y_7)+\frac{1}{2}y_6(y_3+y_7)(1-\frac{1}{\lambda+r})-\frac{1}{2}y_8(y_4+y_5)(1+\frac{1}{\lambda+r})\\
-\frac{2\nu}{r-1}\lambda y_3-\frac{1}{5\nu}y_2(y_3+y_7) +(y_4+y_5-y_3-y_7)(y_3+y_7) +\frac{1}{2}(y_4+y_5)(y_3+y_7)(\frac{1}{\lambda+r}-1)-\frac{1}{2}y_6y_8(1+\frac{1}{\lambda+r}) \\
-\frac{1}{5\nu}y_2y_8 +y_8(y_4+y_5-y_3-y_7)+\frac{1}{2}y_6(y_3+y_7)(1+\frac{1}{\lambda+r})+\frac{1}{2}y_8(y_4+y_5)(\frac{1}{\lambda+r}-1)\\
\end{array} \right).
\]

After computing $S^{-1}N(S\mathbb{Y})$ we see that the complete system in the $\mathbb{Y}$ variables is
\begin{align}
        \dot{y}_1 &= \lambda(y_4+y_5-y_3-y_7) + \frac{r+1}{r-1}(y_3-y_4)\lambda + \frac{r+1}{10\nu^2}y_2(y_7-y_5) +\frac{r+1}{2\nu}(y_4+y_5-y_3-y_7)^2 \notag\\
        & \quad +\frac{r+1}{2\nu}(y_4+y_5)(y_3+y_7)(1-\frac{1}{\lambda+r})+\frac{r+1}{2\nu}y_6y_8(1+\frac{1}{\lambda+r})\notag  \\
        \dot{y}_2 &= -2\nu y_2 + \frac{3}{5(r-1)}y_2(y_4-y_3)  \notag\\
        \dot{y}_3 &= -4\nu y_3 - \frac{2}{5\nu}y_2y_3 \notag \\
        \dot{y}_4 &= -4\nu y_4 - \frac{2}{5\nu}y_2y_4  \notag\\
        \dot{y}_5 &= -2\nu y_5 + \frac{2}{5\nu}y_2y_4-\frac{2\nu}{r-1}\lambda y_4 -\frac{1}{5\nu}y_2(y_4+y_5)+(y_4+y_5-y_3-y_7)(y_4+y_5) \label{E:Ysystem} \\
        & \quad +\frac{1}{2}(y_4+y_5)(y_3+y_7)(1-\frac{1}{\lambda+r})+\frac{1}{2}y_6y_8(1+\frac{1}{\lambda+r})\notag \\
        \dot{y}_6 &= -2\nu y_6 -\frac{1}{5\nu}y_2y_6+y_6(y_4+y_5-y_3-y_7)+\frac{1}{2}y_6(y_3+y_7)(1-\frac{1}{\lambda+r})-\frac{1}{2}y_8(y_4+y_5)(1+\frac{1}{\lambda+r})\notag \\
        \dot{y}_7 &= -2\nu y_7 + \frac{2}{5\nu}y_2y_3-\frac{2\nu}{r-1}\lambda y_3 -\frac{1}{5\nu}y_2(y_3+y_7)+(y_4+y_5-y_3-y_7)(y_3+y_7) \notag \\
        & \quad  +\frac{1}{2}(y_4+y_5)(y_3+y_7)(\frac{1}{\lambda+r}-1)-\frac{1}{2}y_6y_8(1+\frac{1}{\lambda+r})\notag \\
        \dot{y}_8 &= -2\nu y_8 -\frac{1}{5\nu}y_2y_8+y_8(y_4+y_5-y_3-y_7)+\frac{1}{2}y_6(y_3+y_7)(1+\frac{1}{\lambda+r})+\frac{1}{2}y_8(y_4+y_5)(\frac{1}{\lambda+r}-1). \notag
\end{align}

Note that in grouping $\Theta(2)$ terms in the $\dot{y}_1$ equation in \eqref{E:Ysystem} we have:
\begin{align*}
\dot{y}_1 &= -\frac{r+1}{10\nu^2}y_2y_5 + \frac{r+1}{10\nu^2}y_2y_7-\frac{r+1}{2\nu}(\frac{r+1}{r-1}+\frac{1}{r})y_3y_4 + \frac{r+1}{2\nu}(\frac{1}{2}-\frac{r+1}{r-1}-\frac{1}{r})y_3y_5 \nonumber \\
& \quad +\frac{r+1}{2\nu}(\frac{1}{2}+\frac{r+1}{r-1})y_3y_7 +\frac{r(r+1)}{2\nu(r-1)}y_3^2 + \frac{r+1}{2\nu}(\frac{1}{2}+\frac{r+1}{r-1})y_4y_5 + \frac{r+1}{2\nu}(\frac{1}{2}-\frac{r+1}{r-1}-\frac{1}{r})y_4y_7 \nonumber \\
& \quad +\frac{r(r+1)}{2\nu(r-1)}y_4^2+\frac{r^2-1}{2\nu r}y_5y_7 + \frac{(r+1)^2}{2\nu r}y_6y_8 +\Theta(3). \nonumber
\end{align*}

Now we are ready to match coefficients on the stable manifold.  We know that the stable manifold takes the form $y_1=h(y_2,y_3,y_4,y_5,y_6,y_7,y_8)$ and goes through the origin with parabolic tangency. So let:
\[
  h(y_2,y_3,y_4,y_5,y_6,y_7,y_8)= \sum_{i=2}^{8}\sum_{j=i}^{8} c_{ij}y_iy_j + \Theta(3).
\]

Its derivative with respect to $t$ is:
\begin{align*}
\dot{h} &= \sum_{i=2}^{8} \frac{\partial h}{\partial y_i}\dot{y}_i  \\
&= -4\nu(c_{25}y_2y_5+c_{26}y_2y_6+c_{27}y_2y_7+c_{28}y_2y_8+c_{56}y_5y_6+c_{57}y_5y_7+c_{58}y_5y_8+c_{67}y_6y_7  \\
& \quad +c_{68}y_6y_8+c_{78}y_7y_8 +c_{22}y_2^2+c_{55}y_5^2+c_{66}y_6^2+c_{77}y_7^2+c_{88}y_8^2) -6\nu(c_{23}y_2y_3+c_{24}y_2y_4 \\
& \quad  +c_{35}y_3y_5+c_{36}y_3y_6+c_{37}y_3y_7+c_{38}y_3y_8+c_{45}y_4y_5+c_{46}y_4y_6+c_{47}y_4y_7+c_{48}y_4y_8)  \\
& \quad -8\nu(c_{34}y_3y_4+c_{33}y_3^2+c_{44}y_4^2) +\Theta(3). \nonumber
\end{align*}

Matching coefficients of among the quadratic terms in $\dot{h}$ and  $\dot{y}_1$, we obtain the nonzero coefficients below.
\[
c_{25} = \frac{r+1}{40\nu^3}, \quad
c_{27} = -\frac{r+1}{40\nu^3}, \quad
c_{34} = \frac{r+1}{16\nu^2}(\frac{r+1}{r-1}+\frac{1}{r}), \quad
c_{35} = -\frac{r+1}{12\nu^2}(\frac{1}{2}-\frac{r+1}{r-1}-\frac{1}{r}) , \quad
\]
 \[
c_{37} = -\frac{r+1}{12\nu^2}(\frac{1}{2}+\frac{r+1}{r-1}),\qquad\quad
c_{33} = -\frac{r(r+1)}{16\nu^2(r-1)} , \qquad\quad
c_{45} = -\frac{r+1}{12\nu^2}(\frac{1}{2}+\frac{r+1}{r-1})\qquad
\]
\[
c_{47} = -\frac{r+1}{12\nu^2}(\frac{1}{2}-\frac{r+1}{r-1}-\frac{1}{r}), \quad
c_{44} = -\frac{r(r+1)}{16\nu^2(r-1)}, \quad
c_{57} = -\frac{r^2-1}{8\nu^2 r} , \quad
c_{68} = -\frac{(r+1)^2}{8\nu^2 r}.
\]

The stable manifold for the origin in the $\mathbb{Y}$ variables becomes:
\begin{align*}
h(y_2,y_3,y_4,y_5,y_6,y_7,y_8) &= \frac{r+1}{40\nu^3}y_2y_5-\frac{r+1}{40\nu^3}y_2y_7+\frac{r+1}{16\nu^2}\left(\frac{r+1}{r-1}+\frac{1}{r}\right)y_3y_4 \\
& \quad -\frac{r+1}{12\nu^2}\left(\frac{1}{2}-\frac{r+1}{r-1}-\frac{1}{r}\right)y_3y_5-\frac{r+1}{12\nu^2}\left(\frac{1}{2}+\frac{r+1}{r-1}\right)y_3y_7-\frac{r(r+1)}{16\nu^2(r-1)}y_3^2 \\
& \quad -\frac{r+1}{12\nu^2}\left(\frac{1}{2}+\frac{r+1}{r-1}\right)y_4y_5-\frac{r+1}{12\nu^2}\left(\frac{1}{2}-\frac{r+1}{r-1}-\frac{1}{r}\right)y_4y_7-\frac{r(r+1)}{16\nu^2(r-1)}y_4^2  \\
& \quad -\frac{r^2-1}{8\nu^2 r} y_5y_7-\frac{(r+1)^2}{8\nu^2 r}y_6y_8 +\Theta(3).
\end{align*}

In the original variables, the stable manifold can be written as:

\begin{align}
R&=r+h\left(A,-\frac{r-1}{4\nu}w,\frac{r-1}{4\nu}z,P_{re}-\frac{r-1}{4\nu}z,P_{im},Q_{re}+\frac{r-1}{4\nu}w,Q_{im}\right) +\frac{r^2-1}{16\nu^2}w+\frac{r^2-1}{16\nu^2}z\nonumber\\
&\quad-\frac{r+1}{2\nu}P_{re}+\frac{r+1}{2\nu}Q_{re} +\Theta(3),\nonumber
\end{align}
or, equivalently,
\begin{align*}
R &= r+\frac{r^2-1}{16\nu^2}w +\frac{r^2-1}{16\nu^2}z - \frac{r+1}{2\nu}P_{re}+\frac{r+1}{2\nu}Q_{re}-\frac{r^2-1}{160\nu^4}Aw-\frac{r^2-1}{160\nu^4}Az +\frac{r+1}{40\nu^3}AP_{re}-\frac{r+1}{40\nu^3}AQ_{re}  \\
& \quad +\frac{r^2-1}{768\nu^4r}(7r^2+2r+1)wz -\frac{r+1}{96\nu^3r}(4r^2-r+1)wP_{re} + \frac{r+1}{96\nu^3}(3r+1)wQ_{re}+\frac{r^2-1}{768\nu^4}(3r+2)w^2  \\
& \quad -\frac{r+1}{96\nu^3}(3r+1)zP_{re}+\frac{r+1}{96\nu^3r}(4r^2-r+1)zQ_{re} +\frac{r^2-1}{768\nu^4}(3r+2)z^2 -\frac{r^2-1}{8\nu^2r}P_{re}Q_{re}\\
& \quad -\frac{(r+1)^2}{8\nu^2r}P_{im}Q_{im} + \Theta(3),
\end{align*}
concluding the proof of the theorem.

\bibliography{references}
\bibliographystyle{plain}
\end{document}